\documentclass[12pt,a4paper]{amsart}
\usepackage{a4wide}
\usepackage{amsfonts,amsthm,amsmath,amssymb,amscd,color}
\usepackage{graphicx}

\theoremstyle{plain}
\newtheorem{lem}{Lemma}
\newtheorem{prop}[lem]{Proposition}
\newtheorem{thm}[lem]{Theorem}
\newtheorem*{thm*}{Theorem}

\newtheorem*{cor*}{Corollary}

\newtheorem*{prop*}{Proposition}

\theoremstyle{definition}

\newtheorem*{defn*}{Definition}

\newtheorem*{ex*}{Example}
\newtheorem{rem}[lem]{Remark}
\newtheorem*{rem*}{Remark}

\theoremstyle{remark}

\DeclareMathOperator{\id}{id}
\DeclareMathOperator{\cl}{cl}

\DeclareMathOperator{\supp}{supp}

\newcommand{\1}{1\!\!1}
\renewenvironment{proof}{\medbreak{\noindent\em Proof }}{~{\hfill$\bullet$\bigbreak}}

\def\d{{\rm d}}

\begin{document}
 
\title[Stability of  iterated function systems on the circle]{Stability of  iterated function systems on the circle}
\author{Tomasz Szarek}
\author{Anna Zdunik}
\address{Tomasz Szarek, Institute of Mathematics, University of Gda\'nsk, Wita Stwosza 57, 80-952 Gda\'nsk, Poland}
\email{szarek@intertele.pl}
\address{Anna Zdunik, Institute of Mathematics, University of Warsaw,
ul.~Banacha~2, 02-097 Warszawa, Poland}
\email{A.Zdunik@mimuw.edu.pl}

\begin{abstract}
We prove that any Iterated Function System of circle homeomorphisms such that one of them has a dense orbit is asymptotically stable. The corresponding Perron-Frobenius operator is shown to satify the e-property, i.e. for any continuous function its iterates are  equicontinuous. The Strong Law of Large Numbers ({\bf SLLN}) for trajectories starting from an arbitrary point for such function systems is also proved.
\end{abstract}

\subjclass[2010]{60J05 (primary), 47B80 (secondary)}
\keywords{Iterated Function Systems, Markov operators, Semigroups of Homeomorphisms}
\thanks{The research partially supported by the
  Polish NCN grants DEC-2012/07/B/ST1/03320 (T.S.) and 2014/13/B/ST1/04551 (A.Z.). T.S. was also supported by EC Grant RAQUEL}

\maketitle

\section{Introduction}

The action of discrete groups of homeomorphisms (diffeomorphisms) of the circle has been a subject of  intensive studies over last decades. See \cite{ghys} and \cite{navas2} for a detailed  description of recent results.

In this paper, we study the dynamics of discrete, finitely generated semigroups of orientation preserving circle homeomorphisms. After assigning a probability distribution over the set of generators, the system becomes an Iterated Function System. Iterated Function Systems were extensively studied because of their close connections to fractals  (see \cite{Barnsley}). We study spectral properties of the corresponding Markov operator $P$  and its  dual (transfer) operator $P^*$ acting on the space $C(\mathbf{S}^1)$  of continuous functions  on the circle. 
Note that such systems are  neither contracting, nor even contracting in average, so the well-known methods elaborated for contracting systems cannot be applied (see \cite{Barnsley, LY, sleczka, werner}).

On the other hand, if, instead of an action of  discrete (semi)groups, one considers random maps with  some absolutely continuous noise, several strong spectral properties of the corresponding transfer operator, including exponential decay of correlation, can be obtained (see, e.g., \cite{hom}).
However the case of action of discrete semigroups of homeomorphisms seems to be much more delicate.

We consider arbitrary finitely generated semigroups of orientation preserving homeomorphisms. The only restriction on the system which we assume for all our results is that one of the generators has dense orbits. Such systems will be called {\it admissible}. 

\

The  paper is organized as follows. In Section 2 we introduce the necessary notation, and we define the objects we deal with. We also introduce the notions of asymptotic stability and the e-property.

 Our first, preliminary  result is devoted to the uniqueness 
 of an invariant distribution. It is proved in Section 3.  We use here some ideas of Furstenberg (see \cite{Fur}) and Arnold and Crauel (see \cite{Ar} and the references therein). Recently they have been developed  by Kleptsyn {\it et al.}  in \cite{Navas} for proving uniqueness of an invariant measure for groups of circle homeomorphisms (see also \cite{DK}).
 
In Section 4  we  show some  auxiliary properties of semigroups of orientation preserving homeomorphisms which are useful in studying  asymptotic stability of our Iterated Function Systems.

Asymptotic stability is proved in Section 5. 

Section 6 is devoted to the proof of the e--property. Its usefulness in the study of asymp\-totic properties of Markov processes may be observed in \cite{KPS}. 

Finally, in Section 7 we show that any admissible Iterated Function System on the circle satisfies the Strong Law of Large Numbers ({\bf SLLN}) for trajectories starting from an arbitrary point.

\section{Notation}

Let $\mathbf{S}^1$ denote the circle with the counterclockwise orientation and let $\d (x, y)$ denote the normalized distance between $x, y\in \mathbf{S}^1$, so that the  length of the circle  equals $1$.
If $I$ is an arc in $\mathbf{S}^1$ then $|I|$ denotes the (normalized) length of $I$.
Let $x,y\in\mathbf{S}^1$ with $\d(x,y)<1/2$. By $[x,y]$ we denote the oriented (shorter) arc in $\mathbf{S}^1$, from  $x$ to $y$.
We define the following relation on $\mathbf{S}^1\times\mathbf{S}^1$: we say that $x< y$ 
if $\d(x,y)<1/2$ and the orientation of the arc $[x,y]$ coincides with the natural orientation on the circle.
Writing $x_1<x_2<\dots<x_M$ we assume that $\d(x_1, x_M)<1/2$ and $x_i<x_{i+1}, i=1,\dots M-1$.

 By $\mathcal B(\mathbf S^1)$ we denote the $\sigma$--algebra of Borel sets. Further, $C(\mathbf S^1)$ denotes the space of all continuous functions equipped with the supremum norm $\|\cdot\|$.
By $H^+$ we shall denote the set of all orientation preserving circle homeomorphisms.

By $\mathcal M_1$ and $\mathcal M_{fin}$ we denote the set of all Borel probability measures and Borel finite measures on $\mathbf S^1$, respectively. By $\supp\mu$ for $\mu\in\mathcal M_{fin}$ we denote the support of $\mu$.

An operator $P: \mathcal M_{fin}\to \mathcal M_{fin}$ is called a {\it Markov operator} if it satisfies the following two conditions:
\begin{itemize}
\item positive linearity: $P(\lambda_1\mu_1+\lambda_2\mu_2)=\lambda_1 P\mu_1+\lambda_2 P\mu_2$ for $\lambda_1, \lambda_2\ge 0$; $\mu_1, \mu_2\in\mathcal M_{fin}$;
\item preservation of the norm: $P\mu (\mathbf{S}^1)=\mu (\mathbf{S}^1)$ for $\mu\in\mathcal M_{fin}$.
\end{itemize}

A Markov operator $P$ is called a {\it Feller operator} if there is a linear operator $P^*: C(\mathbf S^1)\to C(\mathbf S^1)$ (dual to $P$) such that
$$
\int_{\mathbf S^1} P^* f(x)\mu (\d x)=\int_{\mathbf S^1} f(x) P\mu(\d x)\quad\text{for $f\in C(\mathbf S^1)$, $\mu\in\mathcal M_{fin}$.}
$$
Note that, if such an operator exists, then $P^*(\1)=\1$, hence $P^*(f)\ge 0$ if $f\ge 0$. As a consequence,
$$||P^*(f)||\le ||P^*(|f|)||\le ||f||,$$  
so $P^*$ is a continuous operator.
A measure $\mu_*$ is called {\it invariant} if $P\mu_*=\mu_*$. Since $\mathbf S^1$ is compact, every Feller operator on $\mathbf{S}^1$  has at least one invariant measure, by the Krylov--Bogolyubov theorem.

A Markov operator $P$ is called {\it asymptotically stable} if there exists a unique invariant measure $\mu_*\in\mathcal M_1$ such that
$$
\lim_{n\to\infty}\int_{\mathbf S^1} f(x)P^n\mu(\d x)=\int_{\mathbf S^1} f(x)\mu_*(\d x)
$$
for $f\in C(\mathbf S^1)$ and $\mu\in\mathcal M_1$.

%Let $P$ be a Feller operator. Observe that if for every $f\in C(\mathbf{S}^1)$  $\lim_{n\to\infty} P^{* n} f $ exists and is constant, then $P$ is asymptotically stable.

Following \cite{KPS}, we say that a Feller operator $P$ satisfies the {\it e--property} if for any $x\in\mathbf S^1$ and a continuous  function $f:\mathbf{S}^1\to\mathbb{R}$ we have 
%{\bf po co Lipschitz?? moze w przypadku niezwartym to jest istotne, ale w naszej sytuacji dowodzimy %przeciez dla kazdej funkcji ciaglej?}
$$
\lim_{y\to x} \sup_{n\in\mathbb N}|P^{* n}f (y) - P^{* n}f (x)|=0,
$$
i.e. if the family of iterates $\{P^{* n}(f): n\in\mathbb N\}$ is equicontinuous.
An equivalent notion  describing the e-property is the so--called almost periodicity of the dual operator $P^*$. Recall  that a  bounded linear operator $Q: F\to F$  of a
Banach space is called \emph{almost periodic} if for every $b\in F$  the sequence $(Q^n(b))_{n\in\mathbb N}$ is
relatively compact, that is, its closure in $F$  is compact in the norm topology. See, e.g., \cite{lyubich} or \cite{pubook}, for a description of spectral properties of almost periodic operators.

\vskip5mm

Let $\Gamma=\{g_1,\dots,g_k\}\subset H^+$ be a finite collection of homeomorphisms,  and let $(p_1,\dots p_k)$ be a probability distribution on $\{1,\dots,k\}$. Clearly, it defines a probability distrubution $p$ on   $\Gamma$, by putting $p(g_j)=p_j$. We assume that all $p_i$'s are strictly positive.
Put $\Sigma_n=\{1,\dots, k\}^n$,  and let $\Sigma_*=\bigcup_{n=1}^\infty\Sigma_n$ be the collection of all finite words with entries from $\{1,\dots ,k\}$.
For a sequence ${\bf i}\in\Sigma_*$, ${\bf i}=(i_1,\dots ,i_n)$, we denote by $|{\bf i}|$ its length (equal to $n$).  Finally, denote by $\Omega$ the infinite product  $\Omega=\{1,\dots,k\}^\mathbb{N}$. Let $\mathbb P$ be the product measure distribution on $\Omega$  generated by the initial distribution on $\{1,.\dots,k\}$.

 We consider the action of the semigroup generated by  $\Gamma$, i.e., the action of all compositions $g_{\bf i}=g_{i_n, i_{n-1},\ldots, i_1}=g_{i_n}\circ g_{i_{n-1}}\circ\cdots\circ g_{i_1}$, where ${\bf i}=(i_1,\dots i_n)\in\Sigma_*$.   The action of $\Gamma=\{g_1, g_2,\ldots, g_k\}$ is said to be {\it equicontinuous} if
the family of homeomorphisms $\{g_{\bf i}\}_{{\bf i}\in \Sigma_*}$ is equicontinuous. 
% for every $\delta>0$ there exists $\varepsilon>0$ such that if $\d (x, y)<\varepsilon$, then $\d %(g_{i_n, i_{n-1},\ldots, i_1}(x), g_{i_n, i_{n-1},\ldots i_1}(y))<\delta$ for any sequence $i_1, i_2, %\ldots, i_n\in\{1, 2, \ldots, k\}$ and $n\in\mathbb N$ (here $g_{i_n, i_{n-1},\ldots, i_1}=g_{i_n}\circ g_{i_{n-1}}\circ\cdots\circ g_{i_1}$). 
On the other hand, it is said to be {\it contractive} if for each $x\in\mathbf S^1$, there exists an open interval $I\subset\mathbf S^1$
containing $x$ and a sequence 
%$(i_1, i_2,\ldots)\in\{1, 2,\ldots, k\}^{\mathbb N}$ such that the length of the intervals $g_{i_n, %i_{n-1},\ldots, i_1}(I)$, denoted by $|g_{i_n, i_{n-1},\ldots, i_1}(I)|$, tends to $0$.
$({\bf i}_m)_{m\in\mathbb N}$ of elements of $\Sigma_*$ such that the length of the intervals $g_{{\bf i}_m}(I)$ tends to $0$ as $m\to\infty$.

The pair $(\Gamma, p)$ will be called an {\it Iterated Function System}.
The Markov operator $P: \mathcal M_{fin}\to\mathcal M_{fin}$ of the form
$$
P\mu=\sum_{g\in\Gamma} p(g)\mu\circ g^{-1},
$$
where $\mu\circ g^{-1}(A)=\mu (g^{-1}(A))$ for $A\in\mathcal B(\mathbf S^1)$, describes the evolution of distribution due to action of randomly chosen homeomorphisms from the collection $\Gamma$. 
It is a Feller operator, i.e., the operator $P^*: C(\mathbf S^1)\to C(\mathbf S^1)$ given by the formula
$$
P^*f(x)=\sum_{g\in\Gamma} p(g) f(g(x))\qquad\text{for $f\in C(\mathbf S^1)$ and $x\in\mathbf S^1$}
$$
is its dual.

We will say that an Iterated Function System $(\Gamma, p)$ is asymptotically stable if the corresponding Markov operator $P$ is asymptotically stable.

\section{Uniqueness of  an invariant measure}

The results in this section are in the spirit of \cite{Navas} (see also \cite{Fur}) but we have to point out one substantial difference.  Deroin {\it et al.} studied a group of circle homeomorphisms so that their family of transformations was richer than in our case. In particular for any homeomorphism the inverse of it, and, of course, the identity,  belonged to the class. Since we do not assume this, we have to slightly strengthen our assumption. Indeed, instead of assuming that the action of a semigroup is minimal, we suppose that the system is admissible.
The system $\Gamma=\{g_1, g_2,\ldots, g_k\}\subset H^+$ is said to be {\it admissible} if one of homeomorphisms, say $g_1$, is such that  $\{g_1^n(x): n\ge 1\}$ is dense  in $\mathbf{S}^1$ for some (and thus all) $x\in\mathbf S^1$. From now on we shall assume that if $\Gamma$ is admissible, then $g_1$ has a dense trajectory. If $\Gamma$ is admissible, then the pair $(\Gamma, p)$ will be called an admissible Iterated Function System.
\vskip3mm

We are in a position to formulate the main result of this section.

\begin{prop}\label{Prop1.9.1.15}Let $\Gamma=\{g_1, g_2,\ldots, g_k\}\subset H^+$ be admissible. Then the operator $P$ corresponding to $(\Gamma, p)$ admits a unique invariant measure for any distribution $p$.

\end{prop}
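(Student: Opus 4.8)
The plan is to combine the structure theory of minimal circle actions with a martingale argument, in the spirit of Furstenberg and of Arnold--Crauel, applied to the push--forwards of a stationary measure along the random dynamics.

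I would begin with three soft consequences of admissibility. Let $\mu\in\mathcal M_1$ be any $P$--invariant measure. From $P\mu=\sum_i p_i(g_i)_*\mu$ with all $p_i>0$ one has $\supp\mu=\bigcup_i g_i(\supp\mu)\supseteq g_1(\supp\mu)$, hence $g_1^n(\supp\mu)\subseteq\supp\mu$ for every $n$, and passing to the closure of a $g_1$--orbit gives $\supp\mu=\mathbf S^1$; the same computation applied to an arbitrary nonempty closed set invariant under $g_1,\dots,g_k$ shows that the semigroup generated by $\Gamma$ acts \emph{minimally}. Moreover $\mu$ is \emph{non-atomic}: otherwise the finite nonempty set $A$ of atoms of maximal mass $c>0$ satisfies, for $x\in A$, $c=\mu(\{x\})=\sum_i p_i\,\mu(\{g_i^{-1}(x)\})\le c$, which (since $p_i>0$) forces $g_i^{-1}(A)=A$ for every $i$; in particular $g_1(A)=A$, so $g_1$ has a finite orbit, contradicting admissibility.

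Next I would set up the martingale. Fix an invariant $\mu$ and put $\mu_n^\omega=(g_{\omega_1}\circ\cdots\circ g_{\omega_n})_*\mu$ for $\omega\in\Omega$. Since $\sum_i p_i(g_i)_*\mu=\mu$, the sequence $(\mu_n^\omega)_n$ is a measure--valued martingale for the natural filtration; testing against a countable dense family in $C(\mathbf S^1)$ and applying the martingale convergence theorem, $\mu_n^\omega$ converges weakly, $\mathbb P$--a.s., to a random probability measure $m_\mu(\omega)$ with $\int_\Omega m_\mu\,d\mathbb P=\mu$. It is then enough to show that $m_\mu(\omega)$ does \emph{not depend} on the invariant measure $\mu$: every invariant measure equals $\int_\Omega m_\mu\,d\mathbb P$, and is therefore unique (existence being Krylov--Bogolyubov). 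Here I would invoke the dichotomy for minimal circle actions (Ghys; cf.\ \cite{Navas}): either $g_1,\dots,g_k$ are simultaneously conjugate to rotations, or the action is proximal --- in the sense of Section 2, the system is then \emph{contractive} --- and, because all $p_i>0$, the one--sided product $g_{\omega_1}\circ\cdots\circ g_{\omega_n}$ synchronizes, i.e.\ $\mathbb P$--a.s.\ it converges, uniformly off a bad set collapsing to at most one point, to some $X(\omega)\in\mathbf S^1$. In the first case a Fourier computation using $P\mu=\mu$, the positivity of the $p_i$, and the irrationality of the rotation number of $g_1$ forces $\widehat\mu(n)=0$ for $n\neq0$, so $\mu$ is Lebesgue measure --- the unique invariant measure. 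In the second case, non-atomicity of $\mu$ gives it zero mass at the exceptional point, so $g_{\omega_1}\circ\cdots\circ g_{\omega_n}(y)\to X(\omega)$ for $\mu$--a.e.\ $y$ and hence $\mu_n^\omega\to\delta_{X(\omega)}$ weakly; thus $m_\mu(\omega)=\delta_{X(\omega)}$ with $X(\omega)$ depending only on the system and on $\omega$, so $m_\mu$ is $\mu$--independent and uniqueness follows.

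I expect the genuine difficulty to be the synchronization statement in the contractive case: deriving the almost sure contraction of the \emph{one--sided} product $g_{\omega_1}\circ\cdots\circ g_{\omega_n}$ --- whose increments come only from $g_1,\dots,g_k$, not from a symmetric generating set and without the inverses --- from minimality together with non-conjugacy to rotations, and, relatedly, treating the intermediate regime of Antonov's classification in which the action is a finite cover of a proximal one. This is exactly the circle--homeomorphism analysis, in the spirit of \cite{Fur} and of the work of Kleptsyn {\it et al.}\ \cite{Navas} but adapted from groups to admissible semigroups, that has to be carried out; the remaining steps are essentially formal.
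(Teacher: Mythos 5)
Your overall frame (the measure--valued martingale $\mu\mapsto(g_{\omega_1}\circ\cdots\circ g_{\omega_n})_*\mu$, full support of any invariant measure, non-atomicity) is sound and matches the paper's starting point, but the argument is incomplete exactly at the place you yourself flag as ``has to be carried out''. Your route needs two nontrivial inputs: (i) almost sure synchronization of the \emph{one-sided} products $g_{\omega_1}\circ\cdots\circ g_{\omega_n}$ in the contractive case, strong enough to give $m_\mu(\omega)=\delta_{X(\omega)}$ with the exceptional point and the target $X(\omega)$ independent of the particular invariant measure $\mu$; and (ii) a separate treatment of the finite-cover regime, where (cf.\ Proposition \ref{order}) the limit is \emph{not} a Dirac mass but an average over the orbit of a commuting rational rotation, so the statement ``$m_\mu(\omega)=\delta_{X(\omega)}$'' is false as written and $\mu$-independence of $m_\mu$ needs a different argument. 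Neither point is a formal consequence of minimality plus non-conjugacy to rotations for a finitely generated semigroup without inverses; this is Antonov-type material, and deferring it leaves the core of Proposition \ref{Prop1.9.1.15} unproved. Note also that the Ghys dichotomy you invoke is a theorem about group actions; for semigroups the corresponding dichotomy is precisely Lemma \ref{Lem1.26.12.15}, which must be (and in the paper is) proved directly, using the irrational rotation obtained by conjugating $g_1$.

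The paper's proof shows how to avoid the synchronization statement altogether: it never proves that the limit measure is Dirac or independent of $\mu$. Given two ergodic invariant measures $\mu_1,\mu_2$ (both of full support), fix one contractible arc $J$ and set $\alpha=\min\{\mu_1(J),\mu_2(J)\}>0$. An elementary argument --- finitely many pairwise disjoint contracted images of $J$ (disjointness arranged with the irrational rotation) plus a conditional Borel--Cantelli estimate --- shows that $\mathbb P$-a.s.\ the words $g_{\omega_1}\circ\cdots\circ g_{\omega_n}$ map $J$ into arbitrarily short arcs infinitely often; hence a.s.\ both limit measures $\omega(\mu_1)$ and $\omega(\mu_2)$ put mass at least $\alpha$ on a common point $\upsilon(\omega)$, so $\mu_i\ge\alpha\nu$ with $\nu=\int_\Omega\delta_{\upsilon(\omega)}\,\mathbb P(\d\omega)$, contradicting the mutual singularity of distinct ergodic invariant measures. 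This weaker statement --- both limits charge a common tiny arc, rather than full $\mu$-independent collapse --- is all that uniqueness requires, and it handles the finite-cover case with no extra work; it is exactly the step your proposal replaces by an appeal to a theorem you would still have to prove. (Your Fourier argument in the equicontinuous case is fine once one knows all generators become rotations in a common coordinate, which is what the proof of Lemma \ref{Lem1.26.12.15} yields; the paper argues instead via the e-property and the disjoint-supports theorem of \cite{KapSlSz}, and both routes work there.)
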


Before giving the proof of Proposition \ref{Prop1.9.1.15} we show the following lemma.

\begin{lem}\label{Lem1.26.12.15}
Let $\Gamma=\{g_1, g_2,\ldots, g_k\}\subset H^+$ be admissible. Then the action of $\Gamma$ is either equicontinuous or contractive.
\end{lem}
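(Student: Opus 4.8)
The plan is to follow the classical dichotomy argument for group actions on the circle (Ghys, Margulis), adapting it to the semigroup setting by exploiting admissibility. The key dichotomy tool is the notion of a minimal invariant set combined with the structure of the centralizer of a "contracting" germ. I would first reduce to the minimal set: since $g_1$ has a dense orbit, the whole circle $\mathbf{S}^1$ is the unique minimal set for the semigroup action, so there is no proper closed invariant subset to worry about and no Cantor minimal set to exclude. This is exactly where admissibility is used: it forces minimality of the action on all of $\mathbf{S}^1$ without assuming the semigroup is a group.

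\medskip

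Next I would set up the contraction-versus-equicontinuity alternative directly from the definitions given in Section 2. Assume the action is \emph{not} equicontinuous: then the family $\{g_{\mathbf i}\}_{\mathbf i\in\Sigma_*}$ fails to be equicontinuous at some point, so there exist $\varepsilon_0>0$, points $x_m\to x$, and words $\mathbf i_m$ with $\d\big(g_{\mathbf i_m}(x_m),g_{\mathbf i_m}(x)\big)\ge\varepsilon_0$. Passing to subsequences and using compactness of $\mathbf{S}^1$, I would extract from the images $g_{\mathbf i_m}$ a limit behaviour: the arcs $[x_m,x]$ have lengths tending to $0$, yet their images have length bounded below, which already shows a "North–South"–type spreading. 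The real content is to turn this spreading phenomenon into genuine \emph{contraction} of some open interval. Here the standard trick: if $g_{\mathbf i_m}$ expands the small arc $[x_m, x]$ to definite size, then — since $g_{\mathbf i_m}$ is an orientation preserving homeomorphism of the circle, hence of total length $1$ — the complementary arc, which is almost all of $\mathbf{S}^1$, gets contracted to something of length at most $1-\varepsilon_0$. Iterating, or rather re-centering and repeating the argument using minimality (any point can be moved close to $x$ by some $g_{\mathbf j}$, by density of orbits and the transitivity this forces), I would produce, for an arbitrary $y\in\mathbf S^1$, an interval around $y$ and a sequence of words whose images shrink to a point. That establishes contractivity.

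\medskip

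So the logical skeleton is: (i) admissibility $\Rightarrow$ the action is minimal on $\mathbf S^1$; (ii) if not equicontinuous, locate a point and a sequence of group elements exhibiting uniform local expansion of arbitrarily small arcs; (iii) take complements to get uniform contraction of a large arc; (iv) use minimality to transport the contracted large arc so as to cover a neighbourhood of any prescribed point, and compose to drive the length to $0$, yielding contractivity everywhere. I expect step (iii)–(iv) to be the main obstacle: in the group case one freely uses inverses to "pull back" and to center the dynamics wherever one wants, but here $\Gamma$ generates only a semigroup, so transporting the good interval around the circle must be done entirely with forward compositions, relying on the density of the $g_1$-orbit (and the openness of $g_{\mathbf j}$-images) rather than on group inversion. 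Making precise that one can always find a forward word $g_{\mathbf j}$ carrying a given small interval \emph{into} the expanding region near $x$, with controlled distortion, is the delicate point; I would handle it by a compactness/covering argument on $\mathbf S^1$ using finitely many $g_1$-iterates together with the uniform continuity of each generator.
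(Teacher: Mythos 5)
There is a genuine gap at your steps (iii)--(iv), which is exactly the hard part of the lemma. The complement trick only yields an \emph{additive, one-shot} gain: an arc of length close to $1$ is mapped to an arc of length at most $1-\varepsilon_0$. This does not iterate to give contractivity, because to repeat the argument you must first transport the contracted arc back into the ``good'' position (away from the small expanding arc near $x$), and the transporting words $g_{\mathbf j}$ are arbitrary forward compositions which may expand the arc right back to length close to $1$; after the next application you again only know the length is at most $1-\varepsilon_0$. Nothing in your sketch produces a definite multiplicative factor gained per step, nor controls the distortion of the re-centering maps, so the lengths do not tend to $0$ and contractivity (for a neighbourhood of \emph{every} point, as the definition requires) is not established. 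Appealing to ``minimality plus controlled distortion'' is precisely the missing content, not a routine covering argument.

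The paper closes this gap by a different mechanism. Since $g_1$ has dense orbits it is topologically conjugate to an irrational rotation, and one may assume $g_1$ \emph{is} an irrational rotation (equicontinuity and contractivity are invariant under conjugation). The dichotomy is then: either every generator preserves the length of every arc, in which case all $g_{\mathbf i}$ are isometries and the action is equicontinuous; or some generator $g$ shrinks some arc $I$ strictly, say $|g(I')|\le\alpha|I|$ with $\alpha<1$ for a slightly enlarged arc $I'$. Given any point and any small arc $J$, one uses forward powers of the rotation $g_1$ to place $m$ \emph{pairwise disjoint} isometric copies of $J$ inside $I'$ with total length $m|J|\ge|I|$; since $g$ maps $I'$ into an arc of length at most $\alpha|I|$, the pigeonhole principle gives one copy with $|g\circ g_1^{n_j}(J)|\le\alpha|J|$. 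Because the transport maps are rotations (exact isometries), this factor-$\alpha$ gain compounds under induction, so $|g_{\mathbf i_n}(J)|\to0$. Note also that this resolves, by using only forward powers of $g_1$, the semigroup-versus-group difficulty you correctly flagged; your proposal identifies the obstacle but does not supply the argument that overcomes it.
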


\begin{proof} 
 We start with a simple observation. Namely, if we assume that $\Gamma$ is admissible, then it is topologically conjugated to some $\tilde\Gamma\subset H^+$ such that an irrational rotation $\tilde g\in \tilde\Gamma$ is the conjugate of $g$. Thus, there is no loss of generality in assuming that $\Gamma$ contains an irrational rotation; say, $g_1$ is an irrational rotation.
 
Observe that if $\d (g_i(x), g_i(y))=\d(x, y)$ for $x, y\in\mathbf{S}^1$ and $i=1,\ldots, k$, then the action of $\Gamma$ is equicontinuous. Otherwise there is an arc $I=[a, b]\subset \mathbf{S}^1$ such that $|g(I)|<|I|$. Choose $\alpha\in (0, 1)$ such that $|g(I)|<\alpha |I|$. Let $\varepsilon>0$ be such that for $I'=[a, b+2\varepsilon]$ we have $|g(I')|\le\alpha|I|$. Fix $x\in\mathbf S^1$ and let  $J$ be an arbitrary arc with $|J|\le\varepsilon$. Since $g_1$ is an irrational rotation, there exist $m$ and $n_1,\ldots n_m\in\mathbb N$ such that
$g_1^{n_i}(J)\subset I'$ and $g_1^{n_i}(J)\cap g_1^{n_j}(J)=\emptyset$ for $i, j\in\{1,\ldots, m\}$ and, moreover, $\sum_{i=1}^m |g_1^{n_i}(J)|=m |J|\ge |I|$. Further $g\circ g_1^{n_1}(J)\cup\ldots\cup g\circ g_1^{n_m}(J)\subset g(I')$ and consequently $\sum_{j=1}^m |g\circ g_1^{n_j}(J)|\le \alpha |I|$.
Hence $|g\circ g_1^{n_j}(J)|\le\alpha |J|$ for some $j\in\{1,\ldots, m\}$. Otherwise, we would have
$m\alpha |J|<\sum_{j=1}^m |g\circ g_1^{n_j}(J)|\le \alpha |I|$, which is impossible. By induction we show that there is a sequence $(i_1, i_2,\ldots)\in\Sigma_\infty$ such that $|g_{i_n, i_{n-1},\cdots, i_1}(J)|\to 0$ as $n\to\infty$ and we are done. 
\end{proof}

\begin{proof} {\it of Proposition \ref{Prop1.9.1.15}}. As in the proof of Lemma~\ref{Lem1.26.12.15}, we can  assume that $\Gamma$ contains an irrational rotation.

Let $\Gamma=\{g_1,\ldots, g_k\}$ and let $g_1$ be an irrational rotation. 
If $\Gamma$ is equicontinuous, then the operator $P$ satisfies the e--property. Markov operators with the e--property have been already examined even in the setting of much more general phase spaces, i.e., general Polish spaces (see \cite{KapSlSz}). In particular, it was proved that such operators may have two different invariant measures $\mu_1$ and $\mu_2$ only if $\supp\mu_1\cap\supp\mu_2=\emptyset$ (see Theorem 1 in \cite{KapSlSz}). This may not be the case if $g_1$ is an irrational rotation for the support of any invariant measure is then equal to $\mathbf S^1$.

 Now let $\Gamma$ be contractive. Assume that uniqueness does not hold. Then there exists at least two different ergodic invariant measures. Again, since $\Gamma$ contains an irrational rotation $g_1$, every invariant measure is  supported on the whole circle $\mathbf{S}^1$. Let $\mu_1$ and $\mu_2$ be two ergodic invariant measures.  We shall prove that there exists a positive constant $\alpha$ and a measure $\nu$ such that $\mu_i\ge\alpha\nu$ for $i=1, 2$. Hence $\mu_1=\mu_2$ for the fact that two different ergodic invariant measures are mutually singular.
%Fix $x\in\mathbf {S}^1$ and 
Let $J\subset \mathbf {S}^1$ be such that $|g_{{\bf i}_m}(J)|\to 0$ as $m\to\infty$ for some sequence $({\bf i}_m)_{m\in\mathbb N}$ of elements  of $\Sigma_*$. 
%Since both measures  
Put 
%$\mu_1$ and $\mu_2$ must be supported on the whole circle  $\mathbf{S}^1$, we have 
$\alpha:=\min\{\mu_1(J), \mu_2(J)\}>0$. 
%Let $J_n=g_{i_n}\cdots g_{i_1}(J)$. 

Fix $\mu\in\{\mu_1, \mu_2\}$. Fix $f\in C(\mathbf S^1)$. We define a sequence of random variables   $(\xi_n^f)_{n\in\mathbb N}$ by the formula
$$
\xi_n^f(\omega)=\int_{\mathbf S^1} f(g_{i_1,\ldots, i_n}(x))\mu(\d x)\quad\text{for $\omega=(i_1, i_2,\ldots)$}.
$$
Since $\mu$ is an invariant measure for $P$, we easily check that $(\xi_n^f)_{n\in\mathbb N}$ is a bounded martingale. Note that this martingale depends on the  measure $\mu$.  From the Martingale Convergence Theorem it follows that $(\xi_n^f)_{n\in\mathbb N}$ is convergent $\mathbb P$-a.s. and since the space $C(\mathbf S^1)$ is separable, there exists a subset $\Omega_0$ of $\Omega$ with $\mathbb P(\Omega_0)=1$ such that $(\xi_n^f(\omega))_{n\in\mathbb N}$ is convergent for any $f\in C(\mathbf S^1)$ and $\omega\in\Omega_0$. Therefore for any $\omega\in\Omega_0$ there exists a measure $\omega(\mu)\in\mathcal M_1$ such that
$$
\lim_{n\to\infty}\xi_n^f(\omega)= \int_{\mathbf S^1} f(x)\omega(\mu)(\d x)\qquad\text{for every $f\in C(\mathbf S^1)$}.
$$

Now we are ready to show that for any $\varepsilon>0$ there exists $\Omega_{\varepsilon}\subset\Omega$ with $\mathbb P(\Omega_{\varepsilon})=1$ satisfying the following property:  for every $\omega\in\Omega_{\varepsilon}$ there exists an interval $I$ of length $|I|\le\varepsilon$ such that $\omega(\mu_i)(I)\ge\alpha$ for $i=1, 2$. 

Assume that this fact is proved. Then,  using additionally the compactness of $\mathbf{S}^1$,
we obtain ($\mathbb P$ a.s.) that there exists a point $\upsilon(\omega)\in\mathbf S^1$ such that  $\omega(\mu_i)\ge\alpha\delta_{\upsilon(\omega)}$ for $i=1, 2$, where $\delta_{\upsilon(\omega)}$ is the Diract delta measure supported at  $\upsilon(\omega)$. It is standard to show that the points $\upsilon (\omega)$, $\omega\in\Omega$, may be chosen in such a way that the function $\Omega\ni\omega\to\upsilon(\omega)\in\mathbf S^1$ is measurable. This will finish our proof. Indeed, define the measure $\nu\in\mathcal M_1$ by the formula 
$$
\nu:=\int_{\Omega}\delta_{\upsilon(\omega)} \mathbb P (\d\omega).
$$
and fix a non--negative function $f\in C(\mathbf S^1)$. We have
$$
\begin{aligned}
\int_{\mathbf S^1} f(x)\mu_i(\d x)&=\lim_{n\to\infty}\int_{\mathbf S^1} f(x)P^n\mu_i(\d x)
=\int_{\Omega}\lim_{n\to\infty} \xi_n^f(\omega) \mathbb{P}(\d \omega)\\
&\ge\alpha\int_{\Omega}f({\upsilon(\omega)})\mathbb{P}(\d \omega)=\alpha\int_{\mathbf S^1} f(x)\nu (\d x)
\end{aligned}
$$
for $i=1, 2$. Since $f\in C(\mathbf S^1)$ was an arbitrary non--negative continuous function, we obtain that $\mu_i\ge\alpha\nu$ for $i=1, 2$. 

We now complete our proof by constructing the claimed set $\Omega_\varepsilon$. This follows  some ideas of Deroin {\it et al.} (see \cite{Navas}).
Fix $\varepsilon>0$ and let $l\in\mathbb N$ be such that $2/l<\varepsilon$. 
Since
$|g_{{\bf i}_m}(J)|\to 0$, and $\Gamma$ contains an irrational rotation, we can require additionally  (modifying the sequence ${\bf i}_m$ if necessary)  that the arcs $J_m:=g_{{\bf i}_m}(J)$, where $m\le l$ are  
mutually disjoint. Put $n^*=\max_{m\le l}|{\bf i}_m|$.
% $|J_n|\to 0$ as $n\to\infty$ and $g_1$ is an irrational rotation, we may find sequences $(i_1^m,\ldots, i_{k_m}^m)\in\{1,\ldots, k\}^{k_m}$ such that
%$J_1,\ldots J_l$, where $J_m=g_{i_{k_m}^m}\cdots g_{i_1^m}(J)$ for $m=1,\ldots, l$, are mutually disjoint. 
Now observe that for any sequence ${\bf j}=(j_1,\ldots, j_n)\in \Sigma_*$ there exists $m\in\{1,\ldots, l\}$ such that $|g_{\bf j}(J_m)|<1/l<\varepsilon/2$.
%Now observe that for any sequence $(i_1,\ldots, i_n)$ there exists $m\in\{1,\ldots, l\}$ such that $|g_{i_1}\cdots g_{i_n}(J_m)|=|g_{i_1}\cdots g_{i_n}g_{i_{k_m}^m}\cdots g_{i_1^m}(J)|\le1/l<\varepsilon$.
This shows that for any cylinder  in $\Omega$, defined by fixing the first initial $n$ entries $(j_1,\ldots, j_{n})$, the conditional probability that  $(j_1,\ldots, j_n,\ldots, j_{n+k})$ are such that
$|g_{j_1, \ldots, {j_n},\ldots, j_{n+k}}(J)|\ge\varepsilon$ for all $k=1,\ldots, n^*$ is less than $1-q$ for some $q>0$. Hence there exists $\Omega_{\varepsilon}\subset\Omega$ with $\mathbb P(\Omega_{\varepsilon})=1$ such that for all $(j_1, j_2,\ldots)\in\Omega_{\varepsilon}$ we have $|g_{j_1,\ldots, j_n}(J)|<\varepsilon/2$ for infinitely many $n$'s. Since $\mathbf S^1$ is compact, we may additionally assume that for infinitely many $n$'s the set $g_{j_1,\ldots, j_n}(J)$ is contained in some set $I$ with $|I|\le\varepsilon$.
 This finishes the proof. \end{proof}

\section{Auxiliary results}

We start with the following lemma. Recall that we have normalized the arc length so that the length of the circle is equal to $1$.

\begin{lem}\label{comm}
Let $h$ be a circle orientation preserving homeomorphism. Assume that there exists $r<1$ such that $h$ maps every arc of length $r$ onto an arc of length at most $r$. If $r$ is irrational, then the map $h$ is a rotation and if  $r$
is rational, then $h$ commutes with the rotation by $r$ (denoted by $T_r$).
\end{lem}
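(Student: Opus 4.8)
The plan is to pass to a lift of $h$, where the geometric hypothesis turns into a single monotonicity inequality for a continuous circle function, and then to show that this inequality is in fact an equality. Choose an increasing homeomorphism $H\colon\R\to\R$ with $H(x+1)=H(x)+1$ descending to $h$ under the covering $\R\to\R/\Z=\mathbf S^1$. For $0<r<1$ and any $x\in\R$ one has $x<x+r<x+1$, hence $H(x)<H(x+r)<H(x)+1$, so the counterclockwise arc of length $r$ based at $x$ is carried by $h$ onto the counterclockwise arc from $h(x)$ to $h(x+r)$, whose length equals \emph{exactly} $H(x+r)-H(x)\in(0,1)$ --- here the normalization $H(x+1)=H(x)+1$ is what guarantees this is $H(x+r)-H(x)$ and not its complement $1-(H(x+r)-H(x))$. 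Thus the hypothesis of the lemma is equivalent to $H(x+r)-H(x)\le r$ for all $x$; writing $\psi(x):=H(x)-x$, which is continuous and $1$-periodic, this reads
$$\psi(x+r)\le\psi(x)\qquad\text{for all }x\in\R.$$

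Next I would promote this to an equality and then read off both conclusions. Viewing $\psi$ as a continuous function on the circle, a change of variables gives $\int_0^1\psi(x+r)\,\d x=\int_0^1\psi(x)\,\d x$, so the continuous nonpositive function $x\mapsto\psi(x+r)-\psi(x)$ has zero integral over a period and is therefore identically $0$; equivalently $H(x+r)=H(x)+r$ for all $x$. (When $r=p/q$ is rational one can avoid integration: the chain $\psi(x)\ge\psi(x+r)\ge\cdots\ge\psi(x+qr)=\psi(x)$ collapses because $x+qr\equiv x\pmod 1$.) Now $H(x+r)=H(x)+r$ says precisely that $H$ commutes with the translation $x\mapsto x+r$, hence $h$ commutes with the rotation $T_r$; this already settles the rational case. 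If moreover $r$ is irrational, iterating gives $\psi(x+jr)=\psi(x)$ for all $j\in\N$ and $x$, and since the forward orbit $\{x+jr:j\ge1\}$ of an irrational rotation is dense in $\mathbf S^1$ while $\psi$ is continuous, $\psi$ must be constant; thus $H(x)=x+c$ and $h$ is the rotation $T_c$.

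I do not expect a serious obstacle. The one step needing genuine care is the reduction in the first paragraph --- verifying that for $0<r<1$ the image of a length-$r$ arc is a proper arc of length $H(x+r)-H(x)$, which is exactly where the identity $H(x+1)-H(x)=1$ enters. After that, everything is either the one-line integration (or finite telescoping) argument producing $H(x+r)=H(x)+r$, or the standard density-plus-continuity argument for minimal rotations.
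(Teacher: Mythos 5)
Your proof is correct, and it takes a genuinely different route from the paper. You pass to a lift $H$ with $H(x+1)=H(x)+1$, encode the hypothesis as $\psi(x+r)\le\psi(x)$ for the periodic displacement $\psi=H-\mathrm{id}$, and then upgrade this to the equality $H(x+r)=H(x)+r$ by the zero-integral (or, for rational $r$, telescoping) argument; commutation with $T_r$ is then immediate in \emph{both} cases, and for irrational $r$ the density of $\{x+jr\}$ together with continuity forces $\psi$ to be constant, i.e.\ $h$ a rotation. The paper stays on the circle and splits the cases differently: for irrational $r$ it observes that the set $B$ of lengths $\beta$ with the non-expansion property is closed under taking fractional parts $\{n\beta\}$, so $B$ is dense and by continuity $h$ expands no arc at all, hence is an isometry and thus a rotation; for rational $r=p/q$ it uses the covering-multiplicity count that the arcs $T_r^i(I)$, $i=0,\dots,q-1$, cover $\mathbf S^1$ exactly $p$ times, so no image can be strictly shorter, forcing exact preservation of length-$r$ arcs and hence commutation. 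Your averaging argument is more uniform (one mechanism for both cases) and yields along the way the slightly stronger fact that length-$r$ arcs map onto arcs of length exactly $r$ even when $r$ is irrational, while the paper's argument is more elementary in that it avoids lifts and integration, at the cost of two separate case-specific devices. The only step in your write-up that genuinely needed care --- that the image of the arc from $x$ to $x+r$ has length exactly $H(x+r)-H(x)\in(0,1)$, which relies on $H(x+1)=H(x)+1$ and monotonicity --- you identified and handled correctly.
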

\begin{proof}
First, assume that $r$ is irrational. Denote by $B$ the set of all $\beta\in (0, 1)$ such that  $h$ maps every arc of length $\beta$ onto an arc of length at most $\beta$. Then $r\in B$.
%We claim that the set $B$ is dense in  $[0,1]$. Indeed, 
It is easy to see that if $\beta\in B$ then $\{n\beta\}$  (the fractional part)  is also in $B$.   Since $r$ is irrational,  
the set $\{n\cdot r\}$ is dense in $[0,1]$.

By continuity of $h$ this implies that $h$ maps every arc $I$  onto an arc of length at most $|I|$.
Thus, $h$ must be an isometry -- a rotation.

Now, assume that  $r$ is rational, say $r=p /q$. Denote by $T_r$ the rotation by $r$.
Fix some $x_0\in S^1$ and the arc $I=[x_0, T_r(x_0)]$. 

Observe that $T_r^q=\id$ and the arcs 
$\bigcup_{i=1}^{q-1} T_r^i(I)$ cover the circle $\mathbf S^1$ exactly $p$ times. The same must be true for  
$\bigcup_{i=1}^{q-1} h(T^i_r(I))$. Since the length  of each arc  $h(T^i_r(I))$ is not bigger than the length of $T^i_r(I)$, it implies that $h$ maps every arc of length $r=p/ q$ onto an arc of the same length $r=p/ q$. Consequently, $h$ and $T_r$ commute:
$$h\circ T_r= T_r\circ h.$$
\end{proof}

\begin{prop}\label{stronglyexpansive} Let $\Gamma$ be contractive. Then there exists a rational rotation $R$ which commutes with the elements of $\Gamma$ and such that  for any $x\in\mathbf S^1$ and any closed arc $I\subset [x, R(x))$ there exists a sequence $({\bf i}_n)_{n\in\mathbb N}$ of elements of  $\Sigma_*$ such that $|g_{{\bf i}_n}(I)|\to 0$ as $n\to\infty$.
\end{prop}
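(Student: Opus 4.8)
The plan is to work with the ``contractive'' hypothesis to first extract, by compactness, a uniform contraction scale, and then to exploit the structure forced by Lemma~\ref{comm}. Since $\Gamma$ is contractive, for every $x\in\mathbf S^1$ there is an open arc $I_x\ni x$ and words shrinking $I_x$ to arbitrarily small length; covering $\mathbf S^1$ by finitely many such arcs produces a single $\rho_0>0$ and, for each point, a word $g_{\bf i}$ with $|g_{\bf i}(J)|<\rho_0/2$ for every arc $J$ of length $\le\rho_0$ through that point. The key reduction is: if there were an arc $I$ of length $r<1$ that \emph{no} word in $\Sigma_*$ could shrink at all (i.e.\ $|g_{\bf i}(I)|\ge |I|$ for all $\bf i$), we want to deduce strong rigidity. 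More precisely, I would let
$$
r=\sup\{\,|I|:\ I\text{ a closed arc with }\inf_{{\bf i}\in\Sigma_*}|g_{\bf i}(I)|>0\,\}
$$
—roughly, the largest scale that is never contracted to zero—and argue that $r$ must be rational and positive, using the contractivity to rule out $r=1$ and the admissibility (irrational rotation in $\Gamma$, after conjugation as in the previous proofs) together with Lemma~\ref{comm} to pin down the geometry.

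The heart of the argument is to show that each generator $g_j$ maps every arc of length $r$ to an arc of length at most $r$: if some $g_j$ strictly expanded an arc of length $r$, one could iterate the rotation $g_1$ to tile the circle by translates of a slightly shorter arc as in the proof of Lemma~\ref{Lem1.26.12.15}, and combine with a contracting word to push the supremum defining $r$ strictly below $r$, a contradiction. (Symmetrically one handles the case where some generator would shrink every length-$r$ arc, which would make $r$ itself contractible.) Once every $g_j$, and hence every $g_{\bf i}$, maps arcs of length $r$ to arcs of length $\le r$, Lemma~\ref{comm} applies to each generator: with $r=p/q$ rational, every $g_j$ commutes with $T_r$, so $R:=T_r$ commutes with all of $\Gamma$. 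Then for $x\in\mathbf S^1$ and a closed arc $I\subset[x,R(x))$ we have $|I|<r$, so by the defining property of $r$ there is a word with $|g_{{\bf i}_n}(I)|\to 0$; this is exactly the asserted conclusion.

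I expect the main obstacle to be the bookkeeping in the ``$r$ is rational and is never-contracted'' dichotomy—specifically, showing cleanly that the supremum $r$ is \emph{attained} by some arc that is genuinely non-contractible while every strictly shorter arc \emph{is} contractible, and that $r$ cannot be irrational. The irrational case should be excluded as follows: if $r$ were irrational, the tiling trick with the irrational rotation $g_1$ (exactly as in Lemma~\ref{Lem1.26.12.15}, using that $\{nr\}$ is dense) would let us transport a contracting word's effect across the circle and contract arcs of every length $<1$, contradicting contractivity being the \emph{only} alternative to equicontinuity unless $r$ is a genuine rational obstruction; combined with Lemma~\ref{comm} this forces $g_1$ itself (the irrational rotation) to commute with $T_r$, which is automatic, and forces the other generators into the commuting-with-$T_r$ class. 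The delicate point is making the ``slightly shorter arc'' and ``$\varepsilon$-enlargement'' estimates uniform over all generators simultaneously, but these are the same compactness-and-covering estimates already used twice above, so they should go through with care rather than new ideas.
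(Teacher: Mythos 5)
Your overall plan (replace the paper's pointwise function $r(x)$ by a single global critical scale and then invoke Lemma~\ref{comm}) could in principle be made to work, but as written the proposal has two genuine gaps. First, the defining formula for $r$ is wrong. Non-contractibility is inherited by larger arcs: if $\inf_{\bf i}|g_{\bf i}(I)|>0$ and $I\subset I'$, then $\inf_{\bf i}|g_{\bf i}(I')|>0$. Hence, as soon as one non-contractible arc exists (which is exactly the interesting case, e.g.\ a system commuting with $T_{1/2}$), your supremum equals $1$, and ``ruling out $r=1$ by contractivity'' is impossible -- contractivity only gives contractible \emph{neighbourhoods}, not contractibility of all proper arcs; indeed the whole point of the proposition is that a rational-rotation obstruction may survive. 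What you need is the \emph{infimum} of lengths of non-contractible arcs (equivalently, the largest $s$ such that every closed arc of length $<s$ is contractible); only with that definition does your final step ``$I\subset[x,R(x))$, so $|I|<r$, hence $I$ is contractible'' make sense, and then $r=1$ is not to be excluded but is simply the degenerate case $R=\mathrm{id}$.

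Second, and more importantly, the heart of the argument -- that no generator distorts arcs at the critical scale -- is asserted but not proved. Your sketch never uses the one mechanism by which contractibility interacts with individual generators, namely that it is preserved under \emph{preimages}: if $|g_{{\bf i}_n}(I)|\to0$ then $g_{{\bf i}_n}\circ g_j$ contracts $g_j^{-1}(I)$, so $g_j^{-1}$ maps contractible arcs to contractible arcs (forward images of contractible arcs need not be contractible, since $g_j^{-1}$ is not in the semigroup). Moreover, assuming ``some $g_j$ expands a length-$r$ arc'' does not feed into the Lemma~\ref{Lem1.26.12.15} pigeonhole, which starts from a \emph{contracted} arc; to convert expansion into a usable contraction you need something like the identity $\int_0^1|g_j([x,x+r])|\,dx=r$, or the exact tiling by $T_r$-translates -- but the latter presupposes $r$ rational, which is what you are trying to prove. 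The paper closes exactly this gap differently: it works with the pointwise maximal contractible arc $[x,R(x))$, shows $r(\cdot)$ is one-sidedly semicontinuous and satisfies $r(g_1^{-1}(x))\ge r(x)$, uses ergodicity of the irrational rotation to make $r$ constant, and only then applies Lemma~\ref{comm} to $g^{-1}$ for a non-rotation $g\in\Gamma$ (the irrational case is excluded because Lemma~\ref{comm} would force $g^{-1}$, hence every generator, to be a rotation, contradicting contractivity -- your version of this exclusion is also not an argument). A corrected global-threshold proof would have to supply these missing steps (contractible arcs have length $\le r$, via $g_1^{-1}$-invariance and density; $g_j^{-1}$ does not expand length-$r$ arcs; Lemma~\ref{comm} applied to the inverses), none of which appear in the proposal.
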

\begin{proof}
We will call an arc $I$ contractible if there exists a sequence 
$({\bf i}_n)_{n\in\mathbb N}$ of elements of $\Sigma_*$ such that $|g_{{\bf i}_n}(I)|\to 0$ as $n\to\infty$.
%$(i_n)_{n\ge 1}\in\{1,\ldots, k\}^{\mathbb N}$ such that $|g_{i_n}\ldots g_{i_1}(I)|\to 0$ as $n\to\infty$. 
For $x\in \mathbf S^1$ we define $R(x)$ in the following way. Consider all positively oriented arcs $[x,y]$ such that the arc $[x,y]$ is also contractible. Clearly, if $[x,y]\subset [x,y']$ and $[x,y']$ is contractible then $[x,y]$ is contractible.
Keeping $x$ fixed, take the union of all contractible arcs $[x,y]$. This is an arc of length at least, say, $\varepsilon>0$.
%where $\varepsilon$ comes from Proposition ~\ref{Lem1.26.12.15}; 
One of its endpoints is $x$; the other endpoint is, by definition, $R(x)$. 
Note that it may happen that $R(x)=x$ (in this case the length of $[x, R(x)]$ is equal to the length of the circle).

For $x\in \mathbf S^1$ denote by $r(x)$ the length of the (positively oriented) arc $[x, R(x)]$.
Now, let $x,z,w\in\mathbf{S}^1$; we assume that   $x<w<z$.
Then $r(z)\ge r(w)-\d(z,w)$ and $r(w)\ge r(x)-\d(x,w)$. Consequently:
$$(r(z)+\d(z,x))-(r(w)+\d(w,x))=r(z)-r(w)+\d(z,x)-\d(w,x)=r(z)-r(w)+\d(z,w)\ge 0.$$
Putting  $\tilde r(z):=r(z)+\d(x,z)$, we thus have
$\tilde r(z)\ge \tilde r(w)\ge \tilde r(x)=r(x)$. Clearly, this implies that the function $r$ is Borel measurable.
Moreover, there exists a limit
$\lim_{z\to x^+} \tilde r(z)$, which satisfies
$\lim_{z\to x^+} \tilde r(z)\ge r(x)$, and, consequently, there exists a limit
$$\lim_{z\to x^+}r(z)\ge r(x).$$

Similarly, there exists a limit 
$\lim_{z\to x^-}  r(z)\le r(x)$. 
%Thus, the function $x\mapsto r(x)$ has countably many discontinuities.
%Chyba tez nie jest jasno, ale moze jakos sie wytlumacze...
%dalej tez trzeba by cos poprawic, bo napisalam ze r jest monotone, ale naprawde to $\tilde r$, i troche nie wiadomo co to znaczy..
%Chodzi o to ze r ma malo nieciaglosci.
%To moze napisac: r is constant almost everywhere, and since r satisfies, for every $x$
%$\lim_{z\to x^+}  r(z)\ge r(x)$, and  $\lim_{z\to x^-}  r(z)\le r(x)$,  both limits must be equal to %$r(x)$, so $r$ is continuous and, since it is $g_1$- invariant, it  is constant everywhere.
%Note that the function $x\mapsto r(x)$ is  non--decreasing in the following sense: if $x_n\to x$ so that %the arc $[x, x_n]$ is positively oriented then $r(x_n)\ge r(x)$. From monotonicity it follows that the unction $x\to r(x)$ has countably many discontinuities. 
%In particular, it is Borel measurable. 
Obviously, if  an arc $I$ is contractible then for every $g_i$, $i\in\{1,\ldots, k\}$, the arc $g_i^{-1}(I)$ is also contractible. Thus
\begin{equation}\label{monot}
r(g_1^{-1}(x))\ge r(x),
\end{equation}
where, recall, $g_1$ is an irrational rotation. Since $g_1$ is ergodic with respect to the Lebesgue measure, formula \eqref{monot} implies that the function $r$ is constant  (Lebesgue) almost everywhere in $\mathbf S^1$.
Since, for every $x$, we have 
$\lim_{z\to x^+}  r(z)\ge r(x)$, and  $\lim_{z\to x^-}  r(z)\le r(x)$,  both limits must be equal to $r(x)$, so the function $r$ is continuous and, since it is $g_1$- invariant, it  is constant everywhere.
%and since $r$ is monotone it is constant in $\mathbf S^1$.
So $r(x)\equiv r$  for some $r>0$. Consequently, for every $x\in \mathbf S^1$ the length of the arc $[x, R(x)]$ is equal to $r$.

Now,  choose $g\in\Gamma$ such that $g$ is not a rotation. If an arc $I$ is contractible then $g^{-1}(I)$ is also contractible. Hence,  $g^{-1}$ maps any arc of length $r$ onto an arc of length at most  $r$.  Therefore, $r$ is rational, by Lemma~\ref{comm}. 
From Lemma~\ref{comm} we conclude also that either $r=1$ and then $R=\id$ or $r<1$ and all elements of $\Gamma$ commute with $R:=T_r$.
The proof is complete.
\end{proof}

\begin{lem}
Let $\Gamma$ be contractive and let $R$ be a rational rotation that commutes with $\Gamma$. Then the unique invariant measure $\mu_*$ is $R$--invariant, i.e. $\mu_*\circ R^{-1}=\mu_*$.
\end{lem}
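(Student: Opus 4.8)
The plan is to obtain the conclusion directly from the uniqueness of the invariant measure proved in Proposition~\ref{Prop1.9.1.15} (which applies here, since throughout this section $\Gamma$ is admissible, contractivity having been derived from admissibility with $g_1$ an irrational rotation). The only thing to verify is that pushing the invariant measure $\mu_*$ forward by $R$ produces another invariant probability measure; once that is done, uniqueness forces $\mu_*\circ R^{-1}=\mu_*$.

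First I would record the elementary consequence of the commutation hypothesis: since $g_i\circ R=R\circ g_i$ for every $i\in\{1,\dots,k\}$, inverting both sides gives $R^{-1}\circ g_i^{-1}=g_i^{-1}\circ R^{-1}$, equivalently $(g_i\circ R)^{-1}=(R\circ g_i)^{-1}$, so that $\mu\circ(g_i\circ R)^{-1}=\mu\circ(R\circ g_i)^{-1}$ for any $\mu\in\mathcal M_{fin}$; in other words $(\mu\circ R^{-1})\circ g_i^{-1}=(\mu\circ g_i^{-1})\circ R^{-1}$.

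Next, set $\nu:=\mu_*\circ R^{-1}$, which again belongs to $\mathcal M_1$ because $R$ is a bijection. Using the displayed identity, the linearity of $P$ and of the operation $\,\cdot\,\circ R^{-1}$, and the invariance $P\mu_*=\mu_*$, one gets a one-line computation
\[
\begin{aligned}
P\nu &= \sum_{i=1}^{k} p_i\,(\mu_*\circ R^{-1})\circ g_i^{-1}
= \sum_{i=1}^{k} p_i\,(\mu_*\circ g_i^{-1})\circ R^{-1}\\
&= (P\mu_*)\circ R^{-1} = \mu_*\circ R^{-1} = \nu .
\end{aligned}
\]
Thus $\nu$ is an invariant probability measure, and by Proposition~\ref{Prop1.9.1.15} it must equal $\mu_*$, i.e. $\mu_*\circ R^{-1}=\mu_*$.

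There is no real obstacle in this argument: the only care needed is to get the direction of the commutation right when passing to inverses, to note that the pushforward operation $\,\cdot\,\circ R^{-1}$ is additive and positively homogeneous (so $R^{-1}$ can be pulled out of the finite sum), and to make sure that uniqueness from Proposition~\ref{Prop1.9.1.15} is genuinely available in the present setting (it is, as $\Gamma$ is admissible here). If one wished to avoid working with pushforwards, the same fact follows by noting that the composition operator $f\mapsto f\circ R$ on $C(\mathbf S^1)$ commutes with $P^*$, since $P^*(f\circ R)(x)=\sum_{i}p_i\,f(g_i(R(x)))=\sum_{i}p_i\,f(R(g_i(x)))=(P^*f)(R(x))$, and then transferring this to the action of $P$ on measures; but the pushforward computation above is the most transparent route.
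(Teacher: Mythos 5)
Your proof is correct and follows essentially the same route as the paper: push $\mu_*$ forward by $R$, use the commutation $g_i\circ R=R\circ g_i$ to show $P(\mu_*\circ R^{-1})=(P\mu_*)\circ R^{-1}=\mu_*\circ R^{-1}$, and invoke the uniqueness of the invariant measure from Proposition~\ref{Prop1.9.1.15}. The paper's argument is the same computation written set-by-set on Borel sets $A$, so there is nothing to add.
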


\begin{proof} We have
$$
\aligned
&P(\mu_*\circ R^{-1})(A)=\sum_{i=1}^k p_i\mu_*(R^{-1}g_i^{-1}(A))\\
&=\sum_{i=1}^k p_i\mu_*(g_iR)^{-1}(A))=\sum_{i=1}^k p_i\mu_*(g_i^{-1}(R^{-1}(A))=\mu_*(R^{-1}(A))=(\mu_*\circ R^{-1})(A)
\endaligned
$$
for any Borel set $A$. Since $P$ possesses a unique invariant measure we conclude that $\mu_*\circ R^{-1}=\mu_*$.
\end{proof}

The above lemma and the proof of Proposition \ref{Prop1.9.1.15}  easily imply the following.

\begin{prop}\label{order} The measure $\omega(\mu_*)$ for $\omega=(i_1, i_2,\ldots)$ defined $\mathbb P$-a.s. is $R$--invariant and, consequently, 
$$\omega(\mu_*)=\frac{1}{M}\sum_{m=0}^{M-1}\delta_{R^m(\upsilon(\omega))},$$
where $\upsilon(\omega)$ are the points defined in the proof of Proposition \ref{Prop1.9.1.15}  and $M$ is the order of the rotation $R$, i.e., the smallest integer such that $R^M={\rm Id}$.
\end{prop}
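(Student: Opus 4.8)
The plan is to combine the three facts already established in the preceding lemmas: that $\mu_*$ (the unique invariant measure) is $R$-invariant, that the martingale-limit measures $\omega(\mu_*)$ exist $\mathbb P$-a.s., and that $\omega(\mu_*) \ge \alpha\,\delta_{\upsilon(\omega)}$ (the construction in the proof of Proposition~\ref{Prop1.9.1.15}, applied with the single measure $\mu_*$ in place of both $\mu_1,\mu_2$). First I would observe that $R$-invariance of $\mu_*$ passes to $\omega(\mu_*)$: since $g_i \circ R = R \circ g_i$ for every $i$, one has $g_{i_1,\ldots,i_n}\circ R = R \circ g_{i_1,\ldots,i_n}$, hence for every $f \in C(\mathbf S^1)$,
$$
\xi_n^{f\circ R}(\omega) = \int f(R(g_{i_1,\ldots,i_n}(x)))\,\mu_*(\d x) = \int f(g_{i_1,\ldots,i_n}(R(x)))\,\mu_*(\d x) = \xi_n^{f}(\omega),
$$
using $\mu_*\circ R^{-1} = \mu_*$ in the last equality. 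Passing to the limit in $n$ (valid on the full-measure set $\Omega_0$ on which all these martingales converge) gives $\int f\circ R\, \d(\omega(\mu_*)) = \int f\, \d(\omega(\mu_*))$ for all $f\in C(\mathbf S^1)$, i.e.\ $\omega(\mu_*)\circ R^{-1} = \omega(\mu_*)$.

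Next I would exploit the lower bound $\omega(\mu_*) \ge \alpha\,\delta_{\upsilon(\omega)}$. Applying $R$-invariance repeatedly, $\omega(\mu_*) = \omega(\mu_*)\circ R^{-m}$, so $\omega(\mu_*)$ also dominates $\alpha\,\delta_{R^m(\upsilon(\omega))}$ for every $m = 0,\ldots,M-1$. The points $R^0(\upsilon(\omega)),\ldots,R^{M-1}(\upsilon(\omega))$ are distinct (as $R$ is a rotation of exact order $M$), so the atoms add up and
$$
\omega(\mu_*) \ge \frac{\alpha}{1}\sum_{m=0}^{M-1}\delta_{R^m(\upsilon(\omega))} \cdot \tfrac{1}{1}, \qquad\text{more precisely}\qquad \omega(\mu_*) \ge \alpha\sum_{m=0}^{M-1}\delta_{R^m(\upsilon(\omega))}.
$$
Since $\omega(\mu_*)$ is a probability measure, summing the masses forces $M\alpha \le 1$; but in fact the real point is that the measure $\omega(\mu_*)$, being $R$-invariant and having $\upsilon(\omega)$ as an atom, must distribute mass equally over the whole $R$-orbit and — crucially — can have \emph{no} other mass.

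The remaining, and main, step is to rule out any extra mass, i.e.\ to show $\alpha = 1/M$ and that $\omega(\mu_*)$ is exactly the normalized orbit measure $\frac1M\sum_{m=0}^{M-1}\delta_{R^m(\upsilon(\omega))}$. Here I would argue that the construction of $\upsilon(\omega)$ actually produces, for \emph{every} $\varepsilon>0$, an arc $I_\varepsilon$ of length $\le\varepsilon$ with $\omega(\mu_*)(I_\varepsilon) \ge \alpha$ where $\alpha = \mu_*(J) $ and — taking $\varepsilon \to 0$ along the contractible sequence — one recovers the full mass $\mu_*(J)$ concentrating at $\upsilon(\omega)$; running the same argument with all the $R$-translates $g_{\mathbf i}(J), R(g_{\mathbf i}(J)),\ldots$ (which are disjoint and which, by the choice in Proposition~\ref{Prop1.9.1.15}, shrink simultaneously) shows $\omega(\mu_*)$ charges the union of $M$ shrinking arcs with total mass $\to 1$. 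By $R$-invariance these $M$ atoms carry equal mass $1/M$, leaving no residual mass, which gives the claimed formula. The delicate point — the main obstacle — is making precise that the shrinking contractible arc $J$ can be chosen together with its $R$-orbit \emph{disjointly and simultaneously contracting}, so that the total concentrated mass genuinely exhausts all of $\omega(\mu_*)$; this is exactly the mechanism already set up in the final paragraph of the proof of Proposition~\ref{Prop1.9.1.15} (the disjointness of the arcs $J_m$ and the definition of $n^*$), so I would invoke that construction essentially verbatim rather than redo it.
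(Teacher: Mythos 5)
Your first two steps are sound, and they are what the authors evidently intend (the paper offers no written argument beyond the remark that the $R$-invariance lemma and the proof of Proposition \ref{Prop1.9.1.15} ``easily imply'' the statement): the computation $\xi_n^{f\circ R}=\xi_n^{f}$ via the commutation $g_{i}\circ R=R\circ g_{i}$ and $\mu_*\circ R^{-1}=\mu_*$ correctly gives $R$-invariance of $\omega(\mu_*)$, and propagating the atom along the orbit gives $\omega(\mu_*)\ge\alpha\sum_{m=0}^{M-1}\delta_{R^m(\upsilon(\omega))}$.

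The gap is in your final step, which is the whole content of the equality. You assert that the $M$ shrinking arcs capture total mass tending to $1$, but the construction in Proposition \ref{Prop1.9.1.15}, run with its fixed contractible arc $J$, only ever concentrates mass $\ge\mu_*(J)$ on each orbit point, i.e.\ it yields $\omega(\mu_*)\ge\mu_*(J)\sum_m\delta_{R^m(\upsilon(\omega))}$; since $\mu_*(J)$ may be far below $1/M$, this is strictly weaker than the claimed identity, and nothing you invoke makes the captured mass approach $1$. In particular, the device you cite (the disjointness of the arcs $J_m$ and the definition of $n^*$) serves a different purpose: it is the Borel--Cantelli-type mechanism ensuring that almost surely $|g_{i_1,\dots,i_n}(J)|$ is small for infinitely many $n$; it says nothing about how much mass is concentrated. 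The missing ingredient is Proposition \ref{stronglyexpansive}: every closed arc contained in $[x,R(x))$ is contractible, so one may run the argument with arcs $J_j\subset[x,R(x))$ increasing to $[x,R(x))$. Because the arcs $R^m([x,R(x)))$, $m=0,\dots,M-1$, cover $\mathbf S^1$ and $g_{i_1,\dots,i_n}$ commutes with $R$, one gets $\mu_*\circ g_{i_1,\dots,i_n}^{-1}\bigl(\bigcup_m R^m g_{i_1,\dots,i_n}(J_j)\bigr)\ge\mu_*\bigl(\bigcup_m R^m J_j\bigr)\to 1$ as $j\to\infty$, so in the limit a single $R$-orbit of atoms carries full mass (for large $j$ the orbits produced must coincide, since two disjoint orbits would carry total mass exceeding $1$). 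Only at that point does $R$-invariance force each atom to weigh exactly $1/M$, exclude any residual mass, and place $\upsilon(\omega)$ (which is an atom of positive mass) on that orbit. Without this use of nearly maximal contractible arcs your argument proves an inequality, not the stated formula.
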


\section{Stability}
We start with an easy  criterion for stability when $\Gamma=\{g_1,\ldots, g_k\}$ is equicontinuous.

\begin{thm}
Let $\Gamma=\{g_1, \ldots, g_k\}$ be equicontinuous and let a probability distribution $p$ be given. Let $P$ be the Markov operator corresponding to $(\Gamma, p)$ and $P^*$ its dual. If for any $f\in C({\bf S}^1)$, $f\ge 0$ and $f\not\equiv 0$, there exists $r\in\mathbb N$
such that $P^{* r}f(x)>0$ for $x\in {\bf S}^1$, then the Iterated Function System $(\Gamma, p)$ is asymptotically stable.
\end{thm}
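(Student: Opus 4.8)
The plan is to exploit the equicontinuity of $\Gamma$ together with the positivity hypothesis to show that the family $\{P^{*n}f : n \in \mathbb{N}\}$ is not only equicontinuous but actually convergent in the supremum norm to a constant, for every $f \in C(\mathbf{S}^1)$. First I would observe that since $\Gamma$ is equicontinuous, the family $\{g_{\mathbf{i}} : \mathbf{i} \in \Sigma_*\}$ is equicontinuous, and consequently $\{P^{*n}f : n \in \mathbb{N}\}$ is equicontinuous for every $f \in C(\mathbf{S}^1)$: indeed $P^{*n}f(x) = \sum_{|\mathbf{i}| = n} p_{\mathbf{i}} f(g_{\mathbf{i}}(x))$ and a modulus of continuity for $f$ composed with a uniform modulus of continuity for the $g_{\mathbf{i}}$ gives a common modulus of continuity for all the $P^{*n}f$. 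Also $\|P^{*n}f\| \le \|f\|$ for all $n$. By Arzel\`a--Ascoli, every subsequence of $(P^{*n}f)$ has a uniformly convergent subsubsequence; so it suffices to identify all subsequential limits and show they coincide.

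Next I would bring in the invariant measure. By Proposition~\ref{Prop1.9.1.15} there is a unique invariant measure $\mu_*$, and since $\Gamma$ contains (up to conjugacy, as in Lemma~\ref{Lem1.26.12.15}) an irrational rotation $g_1$, the support of $\mu_*$ is all of $\mathbf{S}^1$. Given $f \in C(\mathbf{S}^1)$, let $\varphi$ be any subsequential uniform limit of $(P^{*n}f)$, say $P^{*n_j}f \to \varphi$. One checks that $\varphi$ is again continuous, that $\int \varphi \, d\mu_* = \int f \, d\mu_*$ (since $\int P^{*n}f \, d\mu_* = \int f \, d\mu_*$ for all $n$ by invariance), and — this is the crucial structural point — that the oscillation cannot increase: $\max \varphi - \min \varphi \le \max f - \min f$, because each $P^{*n}$ is an averaging operator. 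More importantly I would argue that along the limit the oscillation must actually be preserved in a way that forces $\varphi$ to be constant. Concretely: apply $P^{*}$ once more; $P^{*}\varphi = \lim_j P^{*(n_j+1)}f$ is another subsequential limit $\psi$, and by compactness we can iterate. Pick $x_+$ where $\varphi$ attains its maximum $\mathcal{M}$. Then $\mathcal{M} = \varphi(x_+)$ is a limit of values $P^{*n_j}f(x_+) = \sum_{|\mathbf{i}|=n_j} p_{\mathbf{i}} f(g_{\mathbf{i}}(x_+))$, a convex combination; since all $p_{\mathbf{i}} > 0$, if $\varphi$ is a genuine maximum this pushes $f \circ g_{\mathbf{i}}$ near $x_+$ to be near $\mathcal{M}$ for the relevant words.

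The cleanest route to finishing, and the step I expect to be the main obstacle, is to convert "$\varphi$ is a nonconstant continuous subsequential limit" into a contradiction using the positivity hypothesis. Here is the mechanism I would use. Suppose $\varphi$ is nonconstant; write $\mathcal{M} = \max \varphi > \min \varphi = \mathcal{m}$, and set $h = \mathcal{M} - \varphi \ge 0$, so $h \not\equiv 0$ and $h \in C(\mathbf{S}^1)$. Note $P^{*n}\varphi$ is a subsequential limit of $(P^{*m}f)$ for every fixed $n$ (by the diagonal/compactness argument), hence has the same range bounds: $\min P^{*n}\varphi \ge \mathcal{m}$ and $\max P^{*n}\varphi \le \mathcal{M}$. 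Equivalently $0 \le P^{*n}h \le \mathcal{M} - \mathcal{m}$ and $\max \varphi - \min(P^{*n}\varphi)$ ... in particular $\max_x P^{*n}h(x) = \mathcal{M} - \min_x P^{*n}\varphi(x)$. Now apply the hypothesis to $h$: there is $r \in \mathbb{N}$ with $P^{*r}h(x) > 0$ for all $x \in \mathbf{S}^1$, hence by compactness $P^{*r}h \ge \delta$ for some $\delta > 0$, i.e. $P^{*r}\varphi \le \mathcal{M} - \delta$ uniformly. But $P^{*r}\varphi$ is a subsequential limit of $(P^{*m}f)$, and $\varphi$ itself is the limit along $(n_j)$; by passing to a further subsequence of $(P^{*(n_j+r)}f)$ converging to some $\varphi'$ with $\varphi' = P^{*r}\varphi$, we would get $\max \varphi' \le \mathcal{M} - \delta$. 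Iterating $\lceil (\mathcal{M}-\mathcal{m})/\delta \rceil$ times produces a subsequential limit whose maximum is below $\mathcal{m} = \min \varphi$; but every subsequential limit $\varphi''$ satisfies $\int \varphi'' \, d\mu_* = \int f \, d\mu_* = \int \varphi \, d\mu_* \ge \mathcal{m}$ (as $\mu_*$ is a probability measure and $\varphi \ge \mathcal{m}$), while also $\varphi'' \ge \mathcal{m}$ is false now — the uniform bound $\max \varphi'' < \mathcal{m}$ combined with $\int \varphi'' \, d\mu_* \ge \mathcal{m}$ and $\mu_*(\mathbf{S}^1) = 1$ is a contradiction. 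Hence $\varphi$ is constant, and since $\int \varphi \, d\mu_* = \int f \, d\mu_*$, that constant is $\int f \, d\mu_*$. Therefore every subsequential limit of $(P^{*n}f)$ equals the constant $\int f \, d\mu_*$, so $P^{*n}f \to \int f \, d\mu_*$ uniformly; dualizing, $\int f \, d(P^n\mu) = \int P^{*n}f \, d\mu \to \int f \, d\mu_*$ for every $\mu \in \mathcal{M}_1$ and every $f \in C(\mathbf{S}^1)$, which is exactly asymptotic stability.

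The delicate bookkeeping is the "diagonal" argument ensuring that $P^{*r}\varphi$, and its iterates, are themselves uniform subsequential limits of the original sequence $(P^{*n}f)$ so that the range bounds propagate; this is where one must be careful to extract nested subsequences and use the continuity of the bounded operator $P^{*}$ on $C(\mathbf{S}^1)$ together with Arzel\`a--Ascoli. Everything else — equicontinuity of the iterates, invariance of $\int \cdot \, d\mu_*$, full support of $\mu_*$, compactness of $\mathbf{S}^1$ to upgrade pointwise positivity of $P^{*r}h$ to a uniform lower bound — is routine.
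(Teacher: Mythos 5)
Your overall strategy -- equicontinuity of the semigroup gives equicontinuity and uniform boundedness of $\{P^{*n}f\}$, Arzel\`a--Ascoli gives uniform subsequential limits, and the positivity hypothesis should rule out nonconstant limits -- is the right one, and it is essentially the classical argument behind the references the paper points to instead of giving a proof (Theorem 5.5.3 in \cite{pubook}, Theorem 6 in \cite{Walters}). But the crucial contradiction step has a genuine gap. Your iteration ``$\lceil(\max\varphi-\min\varphi)/\delta\rceil$ times'' tacitly assumes the same drop $\delta$ at every step, whereas each new application of the hypothesis is made to a different function $h^{(k)}=\max\varphi^{(k)}-\varphi^{(k)}$ and produces its own $r_k$ and $\delta_k>0$ with no uniform lower bound; the successive maxima may decrease by a summable sequence and never fall below $\min\varphi$, so no contradiction is reached. (Moreover an intermediate limit could be constant, in which case the hypothesis cannot be applied to $h^{(k)}\equiv 0$ at all; that case actually forces convergence of the whole sequence, but you neither notice nor use this.) A second soft spot is the inference ``$P^{*n}\varphi$ is a subsequential limit, hence has the same range bounds'': being a subsequential limit does not by itself give this -- the bounds $\min\varphi\le P^{*n}\varphi\le\max\varphi$ hold simply because $P^{*}$ is positive and $P^{*}\1=\1$, and these one-sided bounds are not enough for the contradiction as you have structured it. Finally, a minor but real misstep: you invoke Proposition \ref{Prop1.9.1.15} and full support of $\mu_*$ (via an irrational rotation), but this theorem does not assume admissibility; only existence of some invariant measure (Krylov--Bogolyubov) is available under its hypotheses.

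The standard repair makes the argument one-step and removes both the iteration and any a priori use of $\mu_*$. Since $P^{*}$ is positive and unital, $n\mapsto\max P^{*n}f$ is nonincreasing and $n\mapsto\min P^{*n}f$ is nondecreasing, so they converge to limits $\Lambda(f)\ge\lambda(f)$, and every uniform subsequential limit $\varphi$ of $(P^{*n}f)$ satisfies $\max\varphi=\Lambda(f)$ and $\min\varphi=\lambda(f)$. If $\Lambda(f)>\lambda(f)$, apply the hypothesis once to $h=\Lambda(f)-\varphi\ge 0$, $h\not\equiv 0$: there are $r$ and $\delta>0$ with $P^{*r}h\ge\delta$, i.e.\ $P^{*r}\varphi\le\Lambda(f)-\delta$; but $P^{*r}\varphi=\lim_j P^{*(n_j+r)}f$ uniformly, while $\max P^{*m}f\ge\Lambda(f)$ for every $m$ by monotonicity -- a contradiction. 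Hence $\Lambda(f)=\lambda(f)=:c(f)$ and $P^{*n}f\to c(f)$ uniformly for every $f$; the functional $f\mapsto c(f)$ is positive, linear and unital, hence $c(f)=\int_{\mathbf S^1} f\,\d\mu_*$ for a probability measure $\mu_*$, which is then the unique invariant measure, and $\int f\,\d(P^n\mu)=\int P^{*n}f\,\d\mu\to\int f\,\d\mu_*$ for all $\mu\in\mathcal M_1$, i.e.\ asymptotic stability. With the monotonicity observation inserted, the rest of your write-up (equicontinuity, Arzel\`a--Ascoli, the dualization at the end) is fine.
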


The proof follows from the results proved for almost periodic primitive operators (for details see Theorem 5.5.3 in \cite{pubook}. See also Theorem 6 in \cite{Walters}).

\begin{rem}\label{remark_on_equicont} Assume now that $\Gamma$ contains two rotations  $g_1=T_\alpha$ and   $g_2=T_\beta$ such that $(\alpha-\beta)$ is irrational. Then there exists $r\in\mathbb N$
such that $P^{* r}f(x)>0$ for $x\in {\bf S}^1$ and consequently the corresponding Iterated Function System $(\Gamma, p)$ is asymptotically stable for any probability distribution $p$.
\end{rem}

\begin{proof} Define the set  
$$
U_{m, x}:=\{T_{k\alpha+(m-k)\beta}(x): k=0,\dots m \}=\{T_{k(\alpha-\beta)}(T_{m\beta}(x)): k=0,\dots m\}
$$
for $x\in{\mathbf S}^1$ and $m\in\mathbb N$. 
Fix $f\in C({\mathbf S}^1)$ and observe that to prove that $P^{* r}f(x)>0$ for $x\in {\mathbf S}^1$ and some $r\in\mathbb N$ it is enough to show that for every $\varepsilon>0$ and $x\in {\mathbf S}^1$ there is $m\in\mathbb N$ such that $U_{m, x}$ forms an $\varepsilon$--net  in $\mathbf{S}^1$. Indeed, since $f$ is continuous and $f\not\equiv 0$, there exists $\varepsilon >0$ such that for any $\varepsilon$--net there is $y$ from this net such that $f(y)>0$. Further, if $U_{m, x}$ is some $\varepsilon$--net, then
$f(T_{i\alpha+(m-i)\beta}(x))>0$ for some $i\in\{0,\ldots, m\}$ and consequently $P^{*m} f(x)\ge p_1^i p_2^{m-i} f(T_{i\alpha+(m-i)\beta}(x))>0$.

Denoting now by  $\gamma:=\alpha-\beta$, the problem reduces to the following immediate observation.
Assume that  $\gamma$ is irrational.
Then for every $\varepsilon>0$ there exists $m\in\mathbb N$ such that for every $y\in\mathbf{S}^1$ the set

$$
\{T_{k\gamma}(y), k=0,\dots m\}
$$
forms an $\varepsilon$--net  in $\mathbf{S}^1$. \end{proof}

Now, assume that  $\Gamma$ is equicontinuous, and let $g_1\in \Gamma$ be a hoemomorphism with dense orbits. After the appropriate change of variables we can assume that $g_1$ is an irrational rotation. Using Lemma ~\ref{Lem1.26.12.15} we see that all elements of $\Gamma$ are rotations.
So we can reformulate Remark ~\ref{remark_on_equicont} as follows.
\begin{rem} Assume that  $\Gamma$ contains an element with dense orbits, and that $\Gamma$ is equicontinuous. If, for some $g_i, g_j\in \Gamma$ the homeomorphism $g_i^{-1}\circ g_j$ has dense orbits then for any probability distribution $p$ the Iterated Function System $(\Gamma, p)$ is asymptotically stable.
\end{rem}

The following lower bound criterion for stability of Markov operators generalizing Doeblin's theorem (see \cite{Doeblin}) will be useful in proving stability when the family $\Gamma$ is contractive. 

\begin{thm}\label{THM1.6.3.15} Let $P$ be an arbitrary Markov operator. Assume that for any $\varepsilon>0$ and $f\in C(\mathbf S^1)$ there exists $\alpha>0$ such that for every $\mu_1, \mu_2\in\mathcal M_1(\mathbf S^1)$ there are $\nu_1, \nu_2\in\mathcal M_1(\mathbf S^1)$ and $N\in\mathbb N$ satisfying
\begin{equation}\label{4.3.15.01}
P^N\mu_i\ge\alpha\nu_i\qquad\text{for $i=1, 2$}
\end{equation}
and
$$
\limsup_{n\to\infty} \left|\int_{\mathbf S^1} f(x)P^n\nu_1 (\d x) -\int_{\mathbf S^1} f(x)P^n\nu_2 (\d x)\right|\le\varepsilon.
$$
Then the operator $P$ is asymptotically stable.
\end{thm}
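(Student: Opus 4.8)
The plan is to show that under the hypothesis of Theorem~\ref{THM1.6.3.15}, the sequence $(P^n\mu)_{n\in\mathbb N}$ is Cauchy in the Fortet--Mourier (dual Lipschitz) metric uniformly in the starting point $\mu\in\mathcal M_1(\mathbf S^1)$, which by compactness of $\mathbf S^1$ and weak-$*$ compactness of $\mathcal M_1$ gives convergence to a common limit $\mu_*$, necessarily the unique invariant measure. Fix $f\in C(\mathbf S^1)$, which we may assume (after an $\varepsilon/3$-approximation that is routine) to be Lipschitz, and fix $\varepsilon>0$; let $\alpha=\alpha(\varepsilon,f)>0$ be the constant from the hypothesis. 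The heart of the argument is a coupling/splitting iteration: starting from any $\mu_1,\mu_2$, apply \eqref{4.3.15.01} to obtain $\nu_1,\nu_2$ and $N_1$ with $P^{N_1}\mu_i\ge\alpha\nu_i$; write $P^{N_1}\mu_i=\alpha\nu_i+(1-\alpha)\rho_i^{(1)}$ with $\rho_i^{(1)}\in\mathcal M_1$, then feed $\rho_1^{(1)},\rho_2^{(1)}$ back into the hypothesis to extract a further common mass $\alpha$ after $N_2$ more steps, and so on.

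Carrying this out, after $j$ rounds (at total time $N_1+\dots+N_j$) one decomposes
$$
P^{N_1+\dots+N_j}\mu_i=\Big(1-(1-\alpha)^j\Big)\sigma_i^{(j)}+(1-\alpha)^j\rho_i^{(j)},
$$
where the ``good part'' $\sigma_i^{(j)}$ is itself a convex combination, with matching weights for $i=1$ and $i=2$, of measures of the form $P^m\nu_1^{(\ell)}$ respectively $P^m\nu_2^{(\ell)}$ arising from the $\ell$-th splitting; by the second hypothesis each such pair of ``siblings'' has, for $m$ large, $\big|\int f\,dP^m\nu_1^{(\ell)}-\int f\,dP^m\nu_2^{(\ell)}\big|\le\varepsilon$. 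Taking $j$ large so that $(1-\alpha)^j\cdot 2\|f\|\le\varepsilon$, and then $n$ large enough that every sibling pair in $\sigma_i^{(j)}$ has evolved long enough past its creation time for the $\limsup$ bound to be effective, one gets
$$
\Big|\int_{\mathbf S^1} f\,d P^{n}\mu_1-\int_{\mathbf S^1} f\,d P^{n}\mu_2\Big|\le 2\varepsilon+2\varepsilon\cdot\big(1-(1-\alpha)^j\big)\le C\varepsilon
$$
for all sufficiently large $n$, with $C$ absolute. Since $\varepsilon$ and $f$ were arbitrary, $\big(\int f\,dP^n\mu_1-\int f\,dP^n\mu_2\big)\to 0$ for every continuous $f$; applying this with $\mu_2=P^k\mu_1$ shows $(\int f\,dP^n\mu_1)_n$ is Cauchy, hence convergent, and the limit functional defines a measure $\mu_*$ independent of $\mu_1$ (take $\mu_1,\mu_2$ arbitrary). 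Feller continuity of $P$ gives $P\mu_*=\mu_*$, and independence of the starting measure gives uniqueness; thus $P$ is asymptotically stable.

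The main obstacle is bookkeeping the recursive splitting so that the ``good parts'' $\sigma_i^{(j)}$ genuinely pair up: one must apply the hypothesis not to a single pair but, at each round, separately to each pair of residual measures produced so far, and ensure the constant $\alpha$ (which depends only on $\varepsilon$ and $f$, not on the measures) can be reused uniformly — this is exactly why the hypothesis is phrased with $\alpha$ quantified after $\varepsilon,f$ but before $\mu_1,\mu_2$. A secondary subtlety is the order of the limits: the $\limsup$ bound for a sibling pair created at round $\ell$ only kicks in for times $n$ far beyond the (random, but bounded at each finite stage) creation time $N_1+\dots+N_\ell$, so one fixes $j$ first, then notes only finitely many sibling pairs and creation times are involved, and only then sends $n\to\infty$. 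Handling the reduction from general $f\in C(\mathbf S^1)$ to Lipschitz $f$ (or simply running the whole argument with $f$ continuous and using uniform continuity on the compact circle) is routine and I would dispatch it in a line.
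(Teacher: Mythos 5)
Your main construction is exactly the one the paper uses: choose $k$ with $2(1-\alpha)^k\|f\|<\varepsilon$, apply the hypothesis to the current pair of residual measures at each round (note that only one residual pair is produced per round, so no branching over ``each pair produced so far'' is needed), obtain the decomposition
$P^{N_1+\cdots+N_k}\mu_i=\alpha P^{N_2+\cdots+N_k}\nu_i^1+\alpha(1-\alpha)P^{N_3+\cdots+N_k}\nu_i^2+\cdots+\alpha(1-\alpha)^{k-1}P^{N_k}\nu_i^k+(1-\alpha)^k\tilde\mu_i^k$
with matched sibling pairs $(\nu_1^j,\nu_2^j)$, fix the finitely many sibling pairs and their creation times first, and only then let $n\to\infty$ to get $\bigl|\int f\,\d P^n\mu_1-\int f\,\d P^n\mu_2\bigr|\le C\varepsilon$ for all large $n$. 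This is precisely the paper's proof (which delegates the bookkeeping to Theorem 5.3 of \cite{SzarDiss}), and your handling of the order of limits and of the uniformity of $\alpha$ is correct.

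The gap is in your endgame. From the two-measure estimate you only obtain, for each \emph{fixed} $k$, that $\int f\,\d P^n\mu_1-\int f\,\d P^{n+k}\mu_1\to 0$ as $n\to\infty$, because the threshold beyond which the estimate holds depends on the pair $(\mu_1,P^k\mu_1)$, hence on $k$; vanishing of differences at every fixed lag does not make the sequence Cauchy (e.g.\ $a_n=\sin(\log n)$ satisfies $a_{n+k}-a_n\to 0$ for every fixed $k$ yet diverges), and no uniformity in $k$ is available from the hypothesis. Moreover, your appeal to ``Feller continuity'' to conclude $P\mu_*=\mu_*$ is not granted by the statement, which concerns an arbitrary Markov operator. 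The correct (and cheaper) finish is the paper's: an invariant measure $\mu_*$ exists (in the intended application $P$ is a Feller operator on the compact circle, so Krylov--Bogolyubov applies, as recorded in Section 2); plugging $\mu_2:=\mu_*$ into your estimate, for which $P^n\mu_2=\mu_*$ for all $n$, gives $\bigl|\int f\,\d P^n\mu_1-\int f\,\d\mu_*\bigr|\le C\varepsilon$ for all large $n$, which is asymptotic stability together with uniqueness. Replacing your Cauchy/Feller paragraph by this observation repairs the argument; as written, that step does not go through.
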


\begin{proof} Fix $\varepsilon>0$ and $f\in C(\mathbf S^1)$.  Fix $\mu_1, \mu_2\in\mathcal M_1(\mathbf S^1)$. We shall show that
\begin{equation}\label{4.3.15.03}
\left|\int_{\mathbf S^1} f\d P^n\mu_1-\int_{\mathbf S^1} f\d P^n\mu_2\right|\le 2\varepsilon
\end{equation}
for all $n$ sufficiently large. To do this choose $k\in\mathbb N$ such that $2(1- \alpha)^k\|f\|<\varepsilon$, where $\alpha>0$ is such that condition (\ref{4.3.15.01}) holds for given $\varepsilon$ and $f$.
Using \eqref{4.3.15.01} we have $P^{N_1}\mu_i=\alpha\nu_i+(1-\alpha)\tilde\mu_i^1$, where $\tilde\mu_i^1$ are some probability measures. Proceeding inductively, and using  \eqref{4.3.15.01}  at every step
we may find sequences of probability measures $\nu_i^1,\ldots, \nu_i^k$ and $\tilde\mu_i^k$   for $i=1, 2$ and a sequence of positive integers $N_1, \ldots, N_k$ such that
$$
\begin{aligned}
P^{N_1+\cdots +N_k}\mu_i&=\alpha P^{N_2+\cdots+N_k}\nu_i^1+\alpha (1-\alpha) P^{N_3+\cdots+N_k}\nu_i^2\\
&+\cdots+\alpha(1-\alpha)^{k-1}P^{N_k}\nu_i^k+(1-\alpha)^k\tilde\mu_i^k\qquad\text{for $i=1, 2$}
\end{aligned}
$$
and
$$
\limsup_{n\to\infty} \left|\int_{\mathbf S^1} f(x)P^n\nu_1^j (\d x) -\int_{\mathbf S^1} f(x)P^n\nu_2^j (\d x)\right|\le\varepsilon.
$$
for $j=1,\ldots, k$ (for details see  Theorem 5.3 in \cite{SzarDiss}). Hence, by the definition of $k$, condition (\ref{4.3.15.03}) follows. Since $P$ admits an invariant measure, the proof is complete.
\end{proof}

\begin{lem}\label{l1.6.3.15} Let $\Gamma=\{g_1, \ldots, g_k\}$ be admissible and contractive. For any $n\in\mathbb N$ and $x, y\in\mathbf S^1$ there exists $N\in\mathbb{N}$ and  two collections of elements of $\Sigma_N$: ${\bf i}_1,\dots, {\bf i}_n$, ${\bf j}_1,\dots, {\bf j}_n$  such that
%for $i=1,\ldots, n$ and $j=1, 2$ such that
$$
g_{{\bf i}_1}(x)< g_{\bf{j}_1}(y) < g_{{\bf i}_2}(x) < g_{{\bf j}_2}(y) < \cdots < g_{{\bf i}_n}(x)<g_{{\bf j}_n}(y).
$$
Moreover, one can require that  the length of the arcs $(g_{{\bf i}_m}(x), g_{{\bf j}_m}(y))$ and $(g_{{\bf j}_m}(y), g_{{\bf i}_{m+1}}(x))$ for $m=1,\ldots, n$ is bigger than some constant, say $\tau$, depending on $n$ but independent of $x$ and $y$.
\end{lem}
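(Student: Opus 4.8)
The plan is to build the required points by an inductive unzipping procedure, alternating the use of the contractivity from Proposition~\ref{stronglyexpansive} with the use of the irrational rotation $g_1$ to spread and re-separate the images. After the usual change of variables we assume $g_1$ is an irrational rotation, and we have at our disposal the rational rotation $R=T_r$ of order $M$ commuting with all elements of $\Gamma$; Proposition~\ref{stronglyexpansive} says that every closed subarc of $[x,R(x))$ is contractible, uniformly in $x$. I will first record the key consequence: given any $x,y\in\mathbf S^1$ we can find ${\bf i}\in\Sigma_*$ with $g_{\bf i}(x)$ and $g_{\bf i}(y)$ very close together (apply contractivity to a short arc containing both, or to $[x,y]$ if it is already short; if $x,y$ are far apart, first rotate by $g_1^{\,t}$ so that $g_1^{t}(x),g_1^{t}(y)$ are close — wait, $g_1$ is a rotation so it does not change $\d(x,y)$, so instead: contractibility of subarcs of $[x,R(x))$, combined with iterating, collapses {\it any} arc of length $<r$, and if $\d(x,y)\ge r$ we split into at most $\lceil 1/r\rceil$ subarcs and collapse each, obtaining a word that makes $g_{\bf i}(x)$ and $g_{\bf i}(y)$ $\varepsilon$-close). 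So: for every $\varepsilon>0$ and every $x,y$ there is a word $\mathbf{w}=\mathbf{w}(x,y,\varepsilon)$ with $\d(g_{\mathbf w}(x),g_{\mathbf w}(y))<\varepsilon$, and moreover all the words needed can be taken of a common length by padding.

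Next I would produce the interlacing pattern for a single pair, i.e. the case $n=1$: we want ${\bf i}_1,{\bf j}_1$ with $g_{{\bf i}_1}(x)<g_{{\bf j}_1}(y)$ and both complementary arcs of length $>\tau$. Start by collapsing $x,y$ to within $\varepsilon$ via a word $\mathbf w$ as above; set $x'=g_{\mathbf w}(x)$, $y'=g_{\mathbf w}(y)$, now very close. Then use the irrational rotation: there are $a,b\in\mathbb N$ with $g_1^{\,a}(x')$ and $g_1^{\,b}(y')$ placed wherever we like on the circle to within any prescribed accuracy (minimality of the $g_1$-orbit), while $g_1^{\,a},g_1^{\,b}$ preserve order and distances. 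Choosing $g_1^{\,a}(x')$ near a fixed point $p$ and $g_1^{\,b}(y')$ near $R(p)$-ish — more precisely near a point that lies strictly between $p$ and $p+1$ in a way that makes both gaps at least, say, $1/3$ — gives ${\bf i}_1=(\text{powers of }1)\cdot\mathbf w$ etc. One must take care that ${\bf i}_1$ and ${\bf j}_1$ have the same length $N$; since the $g_1$-powers $a,b$ can be taken equal (replace $a$ by a common approximate return time and absorb the discrepancy into a second short collapsing word, or simply note that we may pad with extra full rotations so that $|{\bf i}_1|=|{\bf j}_1|$), this is arranged.

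For general $n$ I would iterate the $n=1$ construction "in parallel". Having the $n=1$ data placing one pair with large gaps, apply a further collapsing word $\mathbf w_2$ (from the first paragraph) that shrinks the whole configuration into a tiny arc $A$ of length $<1/(2n)$; order is preserved, so we still have $g_{{\bf i}_1}(x)<g_{{\bf j}_1}(y)$ inside $A$. Now apply $n$ distinct rotation-powers $g_1^{\,a_1},\dots,g_1^{\,a_n}$ that translate $A$ to $n$ disjoint arcs $A_1,\dots,A_n$ spread around $\mathbf S^1$ at mutual distance $>1/(2n)$ — but this moves both coordinates by the {\it same} rotation, so within each $A_m$ we would only get the single pair, not a new pair from the same $x,y$. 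The fix is to do it coordinate-by-coordinate: after collapsing, we have words $\mathbf a,\mathbf b$ with $g_{\mathbf a}(x)$ and $g_{\mathbf b}(y)$ both in a tiny arc; then choose rotation powers $s_1<\dots<s_n$ and $t_1<\dots<t_n$ interleaved ($s_1<t_1<s_2<t_2<\cdots$) and set ${\bf i}_m=(1^{s_m})\cdot\mathbf a$, ${\bf j}_m=(1^{t_m})\cdot\mathbf b$. Because $g_1$ is an irrational rotation we can pick the $s_m,t_m$ so that the $2n$ points $g_1^{s_1}(g_{\mathbf a}(x)),g_1^{t_1}(g_{\mathbf b}(y)),\dots$ are in this cyclic order with all $2n$ consecutive gaps bounded below by a constant $\tau=\tau(n)$ (e.g. by first choosing target points $\theta_1<\phi_1<\dots<\theta_n<\phi_n$ equispaced, hence with gaps $\approx 1/(2n)$, and then using density of the $g_1$-orbit to realize $g_1^{s_m}(g_{\mathbf a}(x))$ within $1/(10n)$ of $\theta_m$ and likewise for the $\phi_m$). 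Finally pad all $2n$ words with extra copies of $1$ (full or partial rotations, or just enough powers of $g_1$) so that they all have one common length $N$; since padding by a power of $g_1$ is a rotation, it moves all $2n$ points rigidly and preserves the interlacing and the gap bound $\tau$ (possibly replace $\tau(n)$ by $\tau(n)/2$ to absorb any final adjustment). This yields exactly the assertion, with $\tau$ depending only on $n$.

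The main obstacle is the bookkeeping that forces all words to a common length $N$ while keeping the two coordinates $x,y$ moving independently: collapsing words depend on $(x,y)$ and need not have equal length for the two coordinates, and the rotation-power trick acts on both coordinates simultaneously. The resolution, as above, is that the {\it only} step where the two coordinates must be decoupled is the last spreading step, and there we prepend {\it different} powers $s_m\ne t_m$ of the single rotation $g_1$ to the (now common) collapsing word, then equalize lengths by further $g_1$-powers, which are rigid rotations and therefore harmless to both the cyclic order and the uniform lower bound $\tau(n)$ on the gaps.
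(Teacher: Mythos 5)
Your construction rests on the claim in your first paragraph that for \emph{arbitrary} $x,y\in\mathbf S^1$ there is a single common word $\mathbf w$ with $\d(g_{\mathbf w}(x),g_{\mathbf w}(y))<\varepsilon$, obtained by cutting $[x,y]$ into subarcs of length $<r$ and collapsing them one by one. This is false whenever the rotation $R=T_r$ of Proposition \ref{stronglyexpansive} is nontrivial. Every $g\in\Gamma$, hence every $g_{\bf i}$, commutes with $R$ and preserves orientation, so if $y$ lies in the arc $[R^j(x),R^{j+1}(x))$ then $g_{\bf i}(y)$ lies in $[R^j(g_{\bf i}(x)),R^{j+1}(g_{\bf i}(x)))$ for every word $\bf i$; e.g.\ for $r=1/3$ and $y=x+1/2$ one gets $\d(g_{\bf i}(x),g_{\bf i}(y))\ge 1/3$ for all $\bf i$, so no common word can synchronize the pair. (This obstruction is exactly why the limit measures in Proposition \ref{order} are averages over the $R$-orbit and why the error term $2\|f\|M/K$ appears in Lemma \ref{l2.6.3.15}; also, collapsing the subarcs ``one by one'' does not work anyway, since later collapsing words may re-expand the earlier images.) The weaker statement that is actually true -- two \emph{different} words of equal length can bring $g_{\mathbf p}(u)$ and $g_{\mathbf q}(v)$ together -- is derived in the paper as a \emph{consequence} of the interlacing statement, not used to prove it, so your argument is circular at this point.

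There is a second, structural gap in the length bookkeeping. With the paper's convention $g_{\bf i}=g_{i_n}\circ\cdots\circ g_{i_1}$, your words ${\bf i}_m$ consist of the collapsing word $\mathbf a$ followed by $s_m$ letters $1$; to equalize lengths you propose to pad with further powers of $g_1$. Padding at the end turns every ${\bf i}_m$ into the same word ($\mathbf a$ followed by $N-|\mathbf a|$ ones), so all your $x$-points coincide; padding at the beginning pre-rotates $x$ before $\mathbf a$ is applied and destroys all control. The paper avoids both problems by a genuinely different induction: at each step it applies a power $h_1^l$ of a contracting word to one coordinate and the \emph{equal-length} pure rotation $h_2^l=g_1^{l|{\bf l}_m|}$ to the other, choosing $l$ so that the rotated point exits the absorbing arc $I_0$, and then swaps the roles of $x$ and $y$; the uniform constant $\tau$ is obtained afterwards by steering both orbits near one fixed point $z$ with equal-length words and invoking continuity of the finitely many maps used in the construction for $x=y=z$. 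Your final uniformity claim for $\tau$ inherits the two gaps above, so it is not established either.
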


\begin{proof}  After a necessary change of variables one can assume that  $g_1\in\Gamma$ is an irrational rotation. Fix $x, y\in \mathbf S^1$ and let $I$ be such an arc that $x\in I$ and $|g_{{\bf l}_m}(I)|\to 0$ as $m\to\infty$ for some sequence $({\bf 
l}_m)_{m\in\mathbb N}$ of elements of $\Sigma_*$. Since $g_1$ is an irrational rotation,  we can additionally assume that  there is an open arc $I_0$, with $I\setminus {\rm 
cl}I_0\neq\emptyset$ such that  $g_{{\bf l}_m}(I)\subset I_0$  for all $m\in\mathbb N$. We shall proceed by induction on $n$.
The case $n=1$ is obvious but we show that we may additionally require that $g_{{\bf i}_1}(x), g_{{\bf j}_1}(y)\in I$. Indeed, 
set $h_1:=g_{{\bf l}_m}$, $h_2=g_1^{|{\bf l}_m|}$ and observe that $h_2$ is an irrational rotation again. Hence there exists $l\in\mathbb N$ such that $h_2^l(y)\in I\setminus\cl I_0$ and $h_2^l(y)>z$ for every $z\in I_0$. Since $h_1^l(x)\in I_0$, we are done.

Now let the statement of our lemma hold for some $n$. Denote by $\tilde I=(g_{{\bf i}_1}(x), g_{{\bf j}_n}(y))\subset I$. We have $h_1(\tilde I)\subset I_0$. Analogously as in the previous step we find  $l$ such that $h_2^l(x)\in I\setminus\cl I_0$ and $h_2^l(x)>z$ for every $z\in I_0$. Hence there exist $N$ and $\tilde{\bf i}_m\in \Sigma_N$, $m=1,\ldots, n+1$,  $\tilde{\bf j}_m\in \Sigma_N$, $m=1,\ldots, n$ such that
$$
g_{\tilde{\bf i}_1}(x)< g_{\tilde{\bf j}_1}(y) < g_{\tilde{\bf i}_2}(x) < g_{\tilde{\bf j}_2}(y) < \cdots <  g_{\tilde{\bf j}_n}(y)<g_{\tilde{\bf i}_{n+1}}(x).
$$

Replacing $\tilde I$ with $\hat I= (g_{{\bf i}_1}(x), g_{{\bf i}_{n+1}}(x))$ and $x$ with $y$, and repeating the above procedure we show our lemma for $n+1$. 

Now we observe that the length of the arcs $(g_{{\bf i}_m}(x), g_{{\bf j}_m}(y))$ and $(g_{{\bf j}_m}(y), g_{{\bf i}_{m+1}}(x))$ for $m=1,\ldots, n$ may be bigger than some constant $\tau$ (depending on $n$) but independent of $x, y\in\mathbf S^{1}$. Fix $z\in\mathbf S^1$. From what we have proved above it follows that for any $u, v\in\mathbf S^1$ there exist two sequences $({\bf p}_n)_{n\in\mathbb N}$
and $({\bf q}_n)_{n\in\mathbb N}$ of elements of $\Sigma_*$ such that $|{\bf p}_n|=|{\bf q}_n|$ for $n\in\mathbb N$ and 
$\d (g_{{\bf p}_n}(u), g_{{\bf q}_n}(v))\to 0$ as $n\to\infty$. Since $g_1$ is an irrational rotation we may assume additionally that 
$g_{{\bf p}_n}(u)\to z$ and $g_{{\bf q}_n}(v)\to z$ as $n\to\infty$. Applying now first part of our consideration  we obtain that
the constant $\tau$, chosen for $x=y=z$, will be also a lower bound for the length of the arcs $(g_{{\bf\tilde i}_m}(u), g_{{\bf \tilde j}_m}(v))$ and $(g_{{\bf \tilde j}_m}(v), g_{{\bf \tilde i}_{m+1}}(u))$ for $m=1,\ldots, n$ and some ${\bf\tilde i}_1,\dots, {\bf\tilde i}_n$, ${\bf\tilde j}_1,\dots, {\bf\tilde j}_n$ (with the same length) such that
$$
g_{{\bf\tilde i}_1}(u)< g_{{\bf\tilde j}_1}(v) < g_{{\bf\tilde i}_2}(u) < g_{{\bf\tilde j}_2}(v) < \cdots < g_{{\bf\tilde i}_n}(u)<g_{{\bf\tilde j}_n}(v).
$$
This completes the proof.
\end{proof}

\begin{lem}\label{l2.6.3.15} Let $\Gamma$ be admissible and contractive and let $(\mu^K_1)_{K\in\mathbb N}$ and $(\mu^K_2)_{K\in\mathbb N}$ be two sequences of probability distributions such that for any $K\in\mathbb N$  the measures $\mu^K_1$ and $\mu^K_2$ are uniformly distributed on $\{x_1, x_2,\ldots, x_K\}\subset\mathbf S^1$ and $\{y_1, y_2\ldots, y_K\}\subset\mathbf S^1$, respectively and
$$
x_1<y_1<x_2<y_2<\cdots < x_K<y_K.
$$
Let a probability distribution $p$ be given and let $P$ be the Markov operator corresponding to the Iterated Function System $(\Gamma, p)$. 
Then for an  arbitrary $f\in C(\mathbf S^1)$ we have
\begin{equation}\label{e1.6.3.15}
\lim_{K\to\infty}\limsup_{n\to\infty}\left |\int_{\mathbf S^1} f(x)P^n\mu^K_1(\d x)-\int_{\mathbf S^1} f(x)P^n\mu^K_2(\d x)\right|=0.
\end{equation}
\end{lem}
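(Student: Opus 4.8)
The plan is to express the two integrals through the action of the semigroup and to reduce the claim to one elementary estimate that is uniform both in $n$ and in the chosen word. Since $\int_{\mathbf S^1}f\,dP^n\mu_1^K=\frac1K\sum_{l=1}^KP^{*n}f(x_l)$, and similarly for $\mu_2^K$, expanding $P^{*n}f(x)=\sum_{{\bf i}\in\Sigma_n}p_{\bf i}f(g_{\bf i}(x))$ with $p_{\bf i}=p_{i_1}\cdots p_{i_n}$ gives
$$
\int_{\mathbf S^1}f\,dP^n\mu_1^K-\int_{\mathbf S^1}f\,dP^n\mu_2^K=\sum_{{\bf i}\in\Sigma_n}p_{\bf i}\,\frac1K\sum_{l=1}^K\bigl(f(g_{\bf i}(x_l))-f(g_{\bf i}(y_l))\bigr).
$$
So it suffices to bound $\frac1K\sum_{l=1}^K\bigl|f(g_{\bf i}(x_l))-f(g_{\bf i}(y_l))\bigr|$ by a quantity depending only on $K$ and $f$ (not on ${\bf i}$ or $n$) which tends to $0$ as $K\to\infty$.

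The key step is that every $g_{\bf i}$ is an orientation preserving homeomorphism, hence preserves the cyclic order of the $2K$ points, so that $g_{\bf i}(x_1)<g_{\bf i}(y_1)<g_{\bf i}(x_2)<\cdots<g_{\bf i}(x_K)<g_{\bf i}(y_K)$ on $\mathbf S^1$. Consequently the positively oriented arcs $A_l:=[g_{\bf i}(x_l),g_{\bf i}(y_l)]$, $l=1,\dots,K$, are pairwise disjoint, so $\sum_{l=1}^K|A_l|\le 1$. Fixing $\delta>0$, at most $1/\delta$ of these arcs can have length exceeding $\delta$; for each of the remaining indices one has $\d(g_{\bf i}(x_l),g_{\bf i}(y_l))\le|A_l|\le\delta$, hence $|f(g_{\bf i}(x_l))-f(g_{\bf i}(y_l))|\le\rho_f(\delta)$, where $\rho_f(\delta):=\sup\{|f(u)-f(v)|:\d(u,v)\le\delta\}$ is finite and tends to $0$ as $\delta\to0$ by uniform continuity of $f$ on the compact space $\mathbf S^1$. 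Splitting the sum over these two classes of indices yields
$$
\frac1K\sum_{l=1}^K\bigl|f(g_{\bf i}(x_l))-f(g_{\bf i}(y_l))\bigr|\le\rho_f(\delta)+\frac{2\|f\|}{\delta K},
$$
and this bound involves neither ${\bf i}$ nor $n$.

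Combining the two displays gives $\bigl|\int f\,dP^n\mu_1^K-\int f\,dP^n\mu_2^K\bigr|\le\rho_f(\delta)+\frac{2\|f\|}{\delta K}$ for every $n\in\mathbb N$; taking $\limsup_{n\to\infty}$, then letting $K\to\infty$, and finally $\delta\to0$ will yield \eqref{e1.6.3.15}. The only subtle point — and really the conceptual heart of the argument — is the uniformity of the estimate in $n$: this is exactly what the disjointness of the arcs $A_l$ delivers, since it holds simultaneously for every word ${\bf i}$ and hence for every $n$, so no ``loss of control'' accumulates along the iteration even though the maps are not contracting. Incidentally, the interlacing of the two point sets and the order preserving property of the $g_{\bf i}$ are all that this particular lemma needs; contractivity enters the overall scheme only through Lemma~\ref{l1.6.3.15}, which supplies such interlaced uniform measures as lower bounds $P^N\mu_i\ge\alpha\nu_i$, and these, together with the present lemma and Theorem~\ref{THM1.6.3.15}, will give asymptotic stability.
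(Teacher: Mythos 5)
Your proof is correct, and it takes a genuinely different, more elementary route than the paper's. You use only the fact that every composition $g_{\bf i}$ is an orientation preserving homeomorphism: the interlacing of the two $K$-point sets is preserved, so the $K$ positively oriented arcs from $g_{\bf i}(x_l)$ to $g_{\bf i}(y_l)$ are pairwise disjoint and of total length at most $1$; hence at most $1/\delta$ of them exceed $\delta$ in length, and averaging gives the bound $\rho_f(\delta)+2\|f\|/(\delta K)$ for the difference of the integrals, uniformly in $n$ and in the word ${\bf i}$ — and, notably, without using admissibility or contractivity at all (they enter the stability argument only through Lemma~\ref{l1.6.3.15} and Theorem~\ref{THM1.6.3.15}, as you say). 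The paper argues instead via the backward compositions: it invokes the martingale limit measures $\omega(\mu_*)$ from the proof of Proposition~\ref{Prop1.9.1.15}, their structure from Proposition~\ref{order} (uniform distribution on an orbit of the rational rotation $R$ of order $M$), and the full support of $\mu_*$, to show that for $\mathbb P$-typical $\omega$ and all $n\ge n_0$ all but at most $2M$ pairs $(x_i,y_i)$ land in a common small neighbourhood $U_j$, which yields the estimate $\varepsilon+2\|f\|M/K$ valid for $n\ge n_0(\tau,\varepsilon,f)$ when the gaps are bounded below by $\tau$; that quantitative form is what Remark~\ref{remark} records and what is reused in Theorem~\ref{MTh} and in the proof of the e--property. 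Your estimate is in fact stronger than what Remark~\ref{remark} asks for — it holds for every $n$ and every interlaced configuration, with no threshold $n_0$ and no lower bound $\tau$ on the gaps — so it would serve the later applications equally well; what the paper's route buys is the structural description of the limit measures (needed elsewhere anyway), while yours buys uniformity and brevity.
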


\begin{proof} Fix $f\in C(\mathbf S^1)$ and  let $\omega=(i_1, i_2,\ldots)$ be such that $\omega(\mu_*)$ is defined. Let $z\in {\mathbf S^1}$ be such $\omega(\mu_*)=(\delta_z+\delta_{R(z)}+\ldots +\delta_{R^{M-1} (z)})/M$, by Proposition \ref{order}. Fix $\varepsilon>0$ and let $U_1,\ldots U_{M-1}$ be mutually disjoint open arcs such that $R^i(z)\in U_i$ and $|f(u)-f(v)|\le\varepsilon/2$ for $u, v\in U_i$,  $i=0, \ldots, M-1$. Fix $K\in\mathbb N$. All the arcs $(x_i, y_i)$ and $(y_i, x_{i+1})$ for $i=1,\ldots, K$ have positive $\mu_*$ measure, by the fact that the support of $\mu_*$ is equal to $\mathbf S^1$. Since $\mu_*\circ (g_{i_1}\cdots g_{i_n})^{-1}$ converges weakly to $\omega(\mu_*)$ and 
\begin{equation}\label{e1.16.11.15}
\omega(\mu_*)(\bigcup_{j=0}^{M-1} U_j)=1,
\end{equation}
for any arc $(u, v)$  and $n$ arbitrary large (depending on $\mu_*((u, v))$) there exists $w\in (u, v)$ such that 
$g_{i_1}\cdots g_{i_n}(w)\in \bigcup_{j=0}^{M-1} U_j$. If this is not the case, we obtain that $\omega(\mu_*)(\bigcup_{j=0}^{M-1} U_j)\le 1-\mu_*((u, v))$, contrary to condition (\ref{e1.16.11.15}).

Due to the above observation we find  points $z^n_i, w^n_i$ such that
$$
x_1<z^n_1<y_1<w^n_1<x_2<z^n_2<y_2<w^n_2<\cdots <x_K<w^n_K<y_K
$$
and 
$$
g_{i_1}\cdots g_{i_n}(z^n_i), g_{i_1}\cdots g_{i_n}(w^n_i)\in\bigcup_{j=0}^{M-1} U_j
$$
for $i=1,\ldots, K$ and all $n\ge n_0$, where $n_0$  depends on the measure  $\mu_*$ (or: on length) of the arcs $(x_1, y_1), (y_1, x_2),\ldots, (y_{K-1}, x_K), (x_K, y_K)$. In other words, it is independent of locations of the points if the distance between them is bigger than some $\tau>0$. 
From this it follows that there are at most $2M$ pairs of $(x_i, y_i)$ such that $g_{i_1}\cdots g_{i_n}(x_i), g_{i_1}\cdots g_{i_n}(y_i)$ are not in the same $U_j$.
Hence
$$
\left|\int_{\mathbf S^1} f(x) \mu_1^K\circ (g_{i_1}\cdots g_{i_n})^{-1}(\d x) - \int_{\mathbf S^1} f(x) \mu_2^K\circ (g_{i_1}\cdots g_{i_n})^{-1}(\d x)\right|\le \varepsilon/2+2\|f\| M/K.
$$
for all $n\ge n_0$. We may find $n_0$ such that the above condition holds for all $\omega$ from some set $\tilde\Omega$ with $\mathbb P(\tilde\Omega)\ge 1-\varepsilon/(2M)$. Then we obtain
$$
\left |\int_{\mathbf S^1} f(x)P^n\mu^K_1(\d x)-\int_{\mathbf S^1} f(x)P^n\mu^K_2(\d x)\right|\le\varepsilon+2\|f\| M/K
$$
for all $n\ge n_0$. Taking limit as $K\to\infty$ completes the proof.
\end{proof}
\begin{rem}\label{remark}
The above proof also shows the following: Fix some $f\in C(\mathbf{S}^1)$. For every $\varepsilon>0$ and $\tau>0$ there exists   $n_0=n_0(\tau,\varepsilon, f)$ such that
$$
\left |\int_{\mathbf S^1} f(x)P^n\mu^K_1(\d x)-\int_{\mathbf S^1} f(x)P^n\mu^K_2(\d x)\right|\le \varepsilon +2\|f\| M/K
$$
for $n\ge n_0$ and any probability measures $\mu_1^K, \mu_2^K$ distributed  uniformly on  $\{x_1, x_2,\ldots, x_K\}$ and $\{y_1, y_2\ldots, y_K\}$, respectively, such that
$$
x_1<y_1<x_2<y_2<\cdots < x_K<y_K,
$$
and the length (measure  $\mu_*$) of the arcs $(x_1, y_1), (y_1, x_2),\ldots, (y_{K-1}, x_K), (x_K, y_K)$ is bounded from below by $\tau>0$ (Here and below $M$ denotes the constant defined in Proposition \ref{order}).
\end{rem}

\begin{thm}\label{MTh} Let $\Gamma=\{g_1, \ldots, g_k\}$ be admissible and contractive and let $p$ be a probability distribution.
Then the Iterated Function System $(\Gamma, p)$ is asymptotically stable.
\end{thm}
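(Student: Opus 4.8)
The plan is to verify the hypotheses of Theorem~\ref{THM1.6.3.15} for the Markov operator $P$ associated to an admissible, contractive $(\Gamma,p)$. Fix $\varepsilon>0$ and $f\in C(\mathbf S^1)$. By Remark~\ref{remark}, choose $K$ so large that $2\|f\|M/K<\varepsilon/2$, where $M$ is the order of the rotation $R$ from Proposition~\ref{order}; this $K$ then produces (via the $n_0$ of that remark) a uniform bound of $\varepsilon$ on the $\limsup$ of the difference of $f$-integrals against $P^n$ applied to any two measures $\mu_1^K,\mu_2^K$ uniformly distributed on interlacing $K$-point sets whose consecutive gaps all have $\mu_*$-measure at least some fixed $\tau>0$. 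So it suffices to show: given arbitrary $\mu_1,\mu_2\in\mathcal M_1(\mathbf S^1)$, there are $N\in\mathbb N$, an $\alpha>0$ (depending only on $\varepsilon,f$, i.e. only on $K$), and probability measures $\nu_1,\nu_2$ with $P^N\mu_i\ge\alpha\nu_i$, where $\nu_1,\nu_2$ are of exactly this interlaced, $\tau$-separated, uniformly-distributed type.

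The core of the argument is Lemma~\ref{l1.6.3.15}, used with $n=K$. First reduce to point masses: since any $\mu_i$ gives positive mass to some small arc, pick $x\in\supp\mu_1$ and $y\in\supp\mu_2$ and two tiny arcs $I_x\ni x$, $I_y\ni y$ with $\mu_1(I_x)>0$, $\mu_2(I_y)>0$. Apply Lemma~\ref{l1.6.3.15} to the pair $(x,y)$ (for the chosen $n=K$): it yields $N\in\mathbb N$ and words ${\bf i}_1,\dots,{\bf i}_K,{\bf j}_1,\dots,{\bf j}_K\in\Sigma_N$ with
$$
g_{{\bf i}_1}(x)<g_{{\bf j}_1}(y)<\cdots<g_{{\bf i}_K}(x)<g_{{\bf j}_K}(y),
$$
and with every consecutive gap of length at least $\tau=\tau(K)>0$, \emph{independent of $x,y$}. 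By continuity of the finitely many homeomorphisms $g_{{\bf i}_m},g_{{\bf j}_m}$, after shrinking $I_x,I_y$ we may assume that for every $x'\in I_x$, $y'\in I_y$ the same chain of strict inequalities holds with gaps still bounded below by $\tau/2$. Now push $\mu_i$ forward: $P^N\mu_1\ge \sum_{m=1}^K p({\bf i}_m)\,(\mu_1|_{I_x})\circ g_{{\bf i}_m}^{-1}$ and similarly for $\mu_2$ with the ${\bf j}_m$. The point is that this lower-bound measure, suitably normalized, dominates $\alpha$ times the uniform average $\nu_1:=\frac1K\sum_{m=1}^K(\text{normalized }(\mu_1|_{I_x})\circ g_{{\bf i}_m}^{-1})$; taking $\alpha:=\big(\min_m p({\bf i}_m)\wedge\min_m p({\bf j}_m)\big)\cdot\min(\mu_1(I_x),\mu_2(I_y))\cdot K^{-1}\cdot(\text{const})$ works — but this requires care, because $\nu_i$ as just defined need not be supported on a $K$-point set. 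The clean fix is to further condition: inside each $I_x$ choose a single point mass (or rather argue that it suffices to verify \eqref{4.3.15.01} with $\nu_i$ supported on points $g_{{\bf i}_m}(\xi_m)$, $g_{{\bf j}_m}(\eta_m)$ for fixed $\xi_m\in I_x$, $\eta_m\in I_y$), because $P^N\mu_i$ gives mass $\ge p({\bf i}_m)\mu_1(I_x)$ to each arc $g_{{\bf i}_m}(I_x)$, hence $P^{N+N'}\mu_i$ — using one more Lemma~\ref{l1.6.3.15}-type step to collapse each $g_{{\bf i}_m}(I_x)$ near a point — dominates $\alpha$ times a uniform measure on interlacing $\tau'$-separated points. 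Then $\nu_1,\nu_2$ are exactly of the form handled by Remark~\ref{remark}, so $\limsup_n|\int f\,dP^n\nu_1-\int f\,dP^n\nu_2|\le\varepsilon$, and Theorem~\ref{THM1.6.3.15} applies.

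I expect the main obstacle to be precisely this bookkeeping that turns the pushed-forward arcs $g_{{\bf i}_m}(I_x)$ into the genuine $K$-point uniformly-distributed interlaced measures that Remark~\ref{remark} requires, while keeping $\alpha$ bounded below by a quantity that depends only on $K$ (hence only on $\varepsilon$ and $f$) and \emph{not} on $\mu_1,\mu_2$. The $x,y$-independence of the separation constant $\tau$ in Lemma~\ref{l1.6.3.15} is what makes this possible: it guarantees that when we iterate the construction (first realize an interlaced configuration, then contract each block toward a point using contractibility, then re-realize interlacing), the separation does not degrade to $0$ and the number of "bad" blocks where $x$- and $y$-images fall in different arcs $U_j$ stays bounded by $2M$ as in Lemma~\ref{l2.6.3.15}. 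Once the reduction to Remark~\ref{remark} is in place, the conclusion is immediate from Theorem~\ref{THM1.6.3.15}, and since every Feller operator on the compact space $\mathbf S^1$ has an invariant measure, asymptotic stability follows.
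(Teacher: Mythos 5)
Your strategy is the paper's: verify Theorem~\ref{THM1.6.3.15} by manufacturing, from Lemma~\ref{l1.6.3.15} with $n=K$, minorizing measures built on interlaced $\tau$-separated configurations, and then apply Lemma~\ref{l2.6.3.15}/Remark~\ref{remark}. But the two points you yourself flag as ``requiring care'' are genuine gaps, and the fixes you sketch do not close them. First, the constant $\alpha$: Theorem~\ref{THM1.6.3.15} demands one $\alpha>0$, depending only on $\varepsilon$ and $f$, that works for \emph{every} pair $\mu_1,\mu_2$. In your construction $\alpha$ carries the factor $\min(\mu_1(I_x),\mu_2(I_y))$ for arcs $I_x,I_y$ chosen small enough that the interlacing persists; there is no positive lower bound for this factor uniform in $\mu_1,\mu_2$, so your $\alpha$ degenerates. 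The paper removes the dependence by a compactness step absent from your outline: for each $(x,y)$ the interlacing (with separation) persists on a product neighbourhood $\overline U_x\times\overline U_y$; finitely many products $U_{x_1}\times U_{y_1},\dots,U_{x_r}\times U_{y_r}$ cover $\mathbf S^1\times\mathbf S^1$; for arbitrary $\mu_1,\mu_2$ some $U_{x_j}\times U_{y_j}$ has $(\mu_1\times\mu_2)$-mass at least $1/r$, which yields $\alpha=p^{N}Kr^{-1}$ (with $p=\min_i p_i$, $N=\max_l N_{x_l,y_l}$), independent of the measures.

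Second, your ``clean fix'' --- collapsing each arc $g_{{\bf i}_m}(I_x)$ near a point by one more contraction step so that $\nu_i$ becomes a genuine $K$-point measure --- cannot work as stated: the push-forward of the (generally non-atomic) restriction $\mu_1|_{I_x}$ under homeomorphisms is never atomic, and replacing a measure on a tiny arc by a Dirac mass inside $\int f\,\d P^n(\cdot)$ would require equicontinuity of the iterates $P^{*n}f$, i.e.\ the e-property, which at this stage is not available (it is proved afterwards, using the present theorem's machinery). The fix is also unnecessary, because condition \eqref{4.3.15.01} and the $\limsup$ hypothesis of Theorem~\ref{THM1.6.3.15} impose no structural form on $\nu_1,\nu_2$. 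The paper simply takes $\nu_1=\int_{U_{x_j}}m_1(x)\,\mu_1(\d x)/\mu_1(U_{x_j})$ and $\nu_2=\int_{U_{y_j}}m_2(y)\,\mu_2(\d y)/\mu_2(U_{y_j})$, mixtures of the uniform $K$-point measures $m_1(x),m_2(y)$; since Remark~\ref{remark} supplies a single $n_0=n_0(\tau,\varepsilon,f)$ valid simultaneously for all pairs $(m_1(x),m_2(y))$ with $(x,y)\in\overline U_{x_j}\times\overline U_{y_j}$, the bound $\bigl|\int f\,\d P^n m_1(x)-\int f\,\d P^n m_2(y)\bigr|\le\varepsilon$ integrates against the product of the normalized restrictions and passes directly to $\nu_1,\nu_2$. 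With the compactness covering and this mixture argument in place of your two problematic steps, your outline becomes the paper's proof.
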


\begin{proof} Recall that we may assume that $g_1$ is an irrational rotation. We are going to show that the assumptions of Theorem \ref{THM1.6.3.15} are satisfied.
Fix $\varepsilon>0$ and let $K\in\mathbb N$  be so large that
$$2\|f\| M/K<\varepsilon/2.$$
%$$
%\limsup_{n\to\infty}\left |\int_{\mathbf S^1} f(x)P^n\mu^K_1(\d x)-\int_{\mathbf S^1} %f(x)P^n\mu^K_2(\d x)\right|<\varepsilon/2.
%$$
Let  $(x, y)\in {\mathbf S^1}\times{\mathbf S^1}$. It follows from Lemma \ref{l1.6.3.15}  that 
there exists $N=N_{x, y}$ and ${\bf i}_l,{\bf j}_l\in\Sigma_N$ for $l=1,\ldots, K$ and  such that
$$
g_{{\bf i}_1}(x)< g_{{\bf j}_1}(y) < g_{{\bf i}_2}(x) < g_{{\bf j}_2}(y) < \cdots < g_{{\bf i}_K}(x)<g_{{\bf j}_K}(y).
$$
Further, we may find open neighbourhood $U_x$, $U_y$ of $x, y$, respectively, such that 
$$
g_{{\bf i}_1}(\tilde x)< g_{{\bf j}_1}(\tilde y) < g_{{\bf i}_2}(\tilde x) < g_{{\bf j}_2}(\tilde y) < \cdots < g_{{\bf i}_K}(\tilde x)<g_{{\bf j}_K}(\tilde y)\quad\text{for $(\tilde x, \tilde y)\in \overline{U}_x\times \overline{U}_y$}.
$$

%$$
%g_{\omega^1_1}(\tilde x)< g_{\omega^1_2}(\tilde y) < g_{\omega^2_1}(\tilde x) < g_{\omega^2_2}(\tilde y) < \cdots < g_{\omega^K_1}(\tilde %x)<g_{\omega^K_2}(\tilde y)\quad\text{for $(\tilde x, \tilde y)\in \overline{U}_x\times \overline{U}_y$}.
%$$
Now, choose a collection of pairs  $(x_1, y_1),\ldots, (x_r, y_r)\in {\mathbf S^1}\times{\mathbf S^1}$ such that ${\mathbf S^1}\times{\mathbf S^1}=\bigcup_{i=1}^r U_{x_i}\times U_{y_i}$. 
Fix $\mu_1, \mu_2\in\mathcal M_1(\mathbf S^1)$. Set $p=:\min_i p_i$. Take the product measure  $\mu_1\times\mu_2$ and observe that there exists $j\in \{1,\dots,  r\}$ such that
$(\mu_1\times\mu_2)(U_{x_{j}}\times U_{y_{j}})\ge 1/r$. It is now easy to check that
\begin{equation}\label{first}
P^{N_{x_{j}, y_{j}}}\mu_1\ge p^{N_{x_{j}, y_{j}}}K\int_{U_{x_{j}}}m_1(x)\mu_1(\d x)
\end{equation}
and
\begin{equation}\label{second}
P^{N_{x_{j}, y_{j}}}\mu_2\ge p^{N_{x_{j}, y_{j}}}K\int_{U_{y_{j}}}m_2(y)\mu_2(\d y),
\end{equation}
where $m_1(x)$ and $m_2(y)$ are probability measures as in the hypothesis of Lemma \ref{l2.6.3.15}, i.e., the measures uniformly distributed over the 
points $$g_{{\bf i}_1}(x), g_{{\bf i}_2} (x),\dots, g_{{\bf i}_K}(x)$$
and  $$g_{{\bf j}_1}(y), g_{{\bf j}_2} (y),\dots, g_{{\bf j}_K}(y),$$ 
respectively.  Set
$$
\nu_1(\cdot)=\frac{\int_{U_{x_{j}}}m_1(x)(\cdot)\mu_1(\d x)}{\mu_1(U_{x_j})}\quad\text{and}\quad\nu_2(\cdot)=\frac{\int_{U_{y_{j}}}m_2(x)(\cdot)\mu_2(\d x)}{\mu_2(U_{y_j})}
$$
and observe that estimates \eqref{first} and \eqref{second} can be rewritten as:
$$
P^{N_{x_{j}, y_{j}}}\mu_i\ge \alpha\nu_i\qquad\text{for $i=1, 2$}
$$
with $\alpha:=p^N K r^{-1}$, where $N=\max_{1\le l\le r} N_{x_l, y_l}$. Since, by Lemma~\ref{l2.6.3.15} and Remark~\ref{remark}  
$$
\left|\int_{\mathbf S^1} f(z)P^n m_1(x)(\d z) -\int_{\mathbf S^1} f(z)P^n m_2 (y)(\d z)\right|\le\varepsilon
$$
for $(x, y)\in U_{x_{j}}\times U_{y_{j}}$ and $n$ sufficiently large, we obtain that
$$
\limsup_{n\to\infty} \left|\int_{\mathbf S^1} f(x)P^n\nu_1 (\d x) -\int_{\mathbf S^1} f(x)P^n\nu_2 (\d x)\right|\le\varepsilon.
$$
This completes the proof.
\end{proof}

\section{E--property}

The e--property  plays an important role in proving asymptotic properties of Markov processes. Usually it is a necessary step when we want to justify that the studied process has a unique invariant measure and is asymptotically stable. Here we inverted the order of our considerations. First we showed the uniqueness of an invariant measure and its stability and now we shall prove independently that  our operator also satisfies the e--property. This fact seems to be surprising because the transformations under considerations are neither contractions, nor average contractions. Here the e--property is forced by the geometry.

\begin{prop}
Let $\Gamma=\{g_1, \ldots, g_k\}$ be admissible and let $p$ be a probability distribution.
Then the operator $P$ corresponding to the Iterated Function System $(\Gamma, p)$ satisfies the e--property.
\end{prop}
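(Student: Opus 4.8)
The plan is to split into the two cases provided by Lemma~\ref{Lem1.26.12.15}. If the action of $\Gamma$ is equicontinuous, then the e--property is immediate: for $f\in C(\mathbf S^1)$ we have $P^{*n}f(x)=\sum_{|{\bf i}|=n}p_{\bf i}\,f(g_{\bf i}(x))$, a convex combination of functions $f\circ g_{\bf i}$; since $\{g_{\bf i}\}_{{\bf i}\in\Sigma_*}$ is equicontinuous and $f$ is uniformly continuous, the family $\{f\circ g_{\bf i}\}$ is equicontinuous, and convex combinations of an equicontinuous family are equicontinuous with the same modulus. Hence $\lim_{y\to x}\sup_n|P^{*n}f(y)-P^{*n}f(x)|=0$.

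The substantive case is when $\Gamma$ is contractive. Here I would exploit the structure established in Section~4. By Proposition~\ref{stronglyexpansive} there is a rational rotation $R=T_r$ (of some order $M$) commuting with all $g_i$, and every closed arc inside $[x,R(x))$ is contractible; moreover every invariant measure is $R$--invariant and, by asymptotic stability (Theorem~\ref{MTh}), $P^n\mu\to\mu_*$ weakly for every $\mu$. The key geometric input is that contractibility is uniform along the circle in the sense made precise by Lemma~\ref{l1.6.3.15} and Remark~\ref{remark}: given $\tau>0$ there is $n_0(\tau,\varepsilon,f)$ so that two uniform measures on interlaced $K$--point sets with gaps $\ge\tau$ become $\varepsilon$--indistinguishable under $P^n$ after time $n_0$, uniformly in the location of the points. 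The plan is to fix $x$ and a nearby $y$, form the two Dirac masses $\delta_x$ and $\delta_y$, and compare $P^{*n}f(x)=\int f\,dP^n\delta_x$ with $P^{*n}f(y)=\int f\,dP^n\delta_y$ by the same lower-bound/coupling mechanism used in the proof of Theorem~\ref{MTh}: using Lemma~\ref{l1.6.3.15} applied to the pair $(x,y)$ one finds $N$ and words ${\bf i}_l,{\bf j}_l\in\Sigma_N$, $l=1,\dots,K$, realizing an interlaced configuration $g_{{\bf i}_1}(x)<g_{{\bf j}_1}(y)<\cdots<g_{{\bf i}_K}(x)<g_{{\bf j}_K}(y)$ with all gaps $\ge\tau$, so that $P^N\delta_x\ge\alpha m_1$ and $P^N\delta_y\ge\alpha m_2$ with $\alpha=p^N K$ (here $p=\min_ip_i$) and $m_1,m_2$ the uniform measures on the $K$ points $g_{{\bf i}_l}(x)$ and $g_{{\bf j}_l}(y)$. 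Writing $P^N\delta_x=\alpha m_1+(1-\alpha)\tilde\mu_1$ and similarly for $y$, iterating this decomposition $t$ times (as in the proof of Theorem~\ref{THM1.6.3.15}) and applying Remark~\ref{remark} at every stage, we get for $n$ large
$$
\bigl|P^{*n}f(x)-P^{*n}f(y)\bigr|\le (1-\alpha)^t\cdot 2\|f\|+\bigl(\varepsilon+2\|f\|M/K\bigr).
$$
Choosing first $K$ large, then $t$ large, makes the right-hand side $\le 3\varepsilon$ for all $n\ge n_0$, and the remaining finitely many iterates $P^{*n}f$, $n<n_0$, are each continuous, so $\sup_{n<n_0}|P^{*n}f(y)-P^{*n}f(x)|\to0$ as $y\to x$. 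Combining gives the e--property.

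The main obstacle is that the time $N$ and the constant $\alpha=p^NK$ in the lower bound depend on the pair $(x,y)$, whereas the e--property requires a uniform modulus of continuity at $x$. The remedy—exactly as in Theorem~\ref{MTh}—is the compactness argument: the arc configuration in Lemma~\ref{l1.6.3.15} is stable under small perturbations of $x$ and $y$, so each $(x,y)$ has a neighbourhood $U_x\times U_y$ on which the same words ${\bf i}_l,{\bf j}_l$ work; covering $\mathbf S^1\times\mathbf S^1$ by finitely many such boxes yields a uniform $N$, hence a uniform $\alpha$. The one genuinely new point compared with Theorem~\ref{MTh} is that we need the gaps to be bounded below by a $\tau>0$ that does not shrink as $y\to x$; this is guaranteed by the last part of Lemma~\ref{l1.6.3.15}, which gives a lower bound $\tau$ for the gaps depending only on $K$ (through $n$) and not on the base points, so the estimate above is genuinely uniform in $y$ near $x$. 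With that in hand the argument closes.
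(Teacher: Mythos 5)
Your proposal is correct and follows essentially the paper's own route: the equicontinuous case is immediate, and in the contractive case both arguments combine Lemma~\ref{l1.6.3.15}, the iterated Doeblin-type decomposition from Theorem~\ref{THM1.6.3.15}/Theorem~\ref{MTh} and the uniform estimate of Remark~\ref{remark}, with the finitely many initial iterates $P^{*n}f$, $n<n_0$, handled by their continuity. The only (harmless) difference is how locality at $x$ is achieved: the paper applies the machinery at the single point $z$ (the pair $(z,z)$), obtains two decompositions of $P^{N_1+\cdots+N_m}\delta_z$ supported on points $g_{\bf j}(z)$ and then perturbs one of them to a nearby $w$ using continuity of the finitely many maps $g_{\bf j}$ (keeping the gaps $\ge\tau/2$), whereas you run the decomposition directly for the pair $(x,y)$ and extract uniformity from the finite cover of $\mathbf S^1\times\mathbf S^1$, which in fact gives an estimate uniform over all pairs rather than just for $y$ near $x$.
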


\begin{proof} If $\Gamma=\{g_1, \ldots, g_k\}$ is equicontinuous, the e--property follows immediately. So we may assume that $\Gamma$ is contractive. Fix a function $f\in C(\mathbf{S}^1)$  and $\varepsilon>0$. Let $K\in\mathbb N$ be such that $2\|f\| M/K\le\varepsilon/2$. (Here again $M$ is the constant coming from Proposition ~\ref{order}). Take an arbitrary point  $z\in\mathbf S^1$.
% corresponding to $\{g_1, \ldots, g_k\}$ such that $\omega(\mu_*)$ is supported on $M$ Dirac delta). 
 From the proof of Theorem \ref{MTh}, applied for both $x:=z$ and $y:=z$  (see also Lemma~\ref{l1.6.3.15}), it follows that  we may find $\alpha>0$ and $N_1, \ldots, N_m$   such that $P^{N_1+\cdots +N_m}\delta_{z}$ admits two representations below:
\begin{equation}\label{e1.17.04.15}
\begin{aligned}
P^{N_1+\cdots +N_m}\delta_{z}&=\alpha P^{N_2+\cdots+N_m}\nu_1^1+\alpha (1-\alpha) P^{N_3+\cdots+N_m}\nu_1^2\\
&+\cdots+\alpha(1-\alpha)^{m-1}P^{N_m}\nu_1^m+(1-\alpha)^m\mu_1\\
P^{N_1+\cdots +N_m}\delta_{z}&=\alpha P^{N_2+\cdots+N_m}\nu_2^1+\alpha (1-\alpha) P^{N_3+\cdots+N_m}\nu_2^2\\
&+\cdots+\alpha(1-\alpha)^{m-1}P^{N_m}\nu_2^m+(1-\alpha)^m\mu_2,
\end{aligned}
\end{equation}
where, for every $j=1,\dots m$  the pair of probability measures   $(\nu_1^j, \nu_2^j)$  is  a convex combination of pairs of measures such that each  pair is uniformly distributed  over some collections of points $\{x_1,\ldots, x_K\}$ and $\{y_1,\ldots, y_K\}$, respectively, and
$$
x_1<y_1<x_2<y_2<\cdots < x_K<y_K,
$$
where the length of the arcs $(x_1, y_1), (y_1, x_2),\ldots, (y_{K-1}, x_K), (x_K, y_K)$ is bounded from below by some $\tau>0$ depending only on  $K$. Furthermore, $\mu_1$, $\mu_2$
are some probability measures.

Now,  let $m\in\mathbb N$ be so large that $(1-\alpha)^m\le\varepsilon (4||f||)^{-1}$. Since every element from $\{x_1,\ldots, x_K\}$ and $\{y_1,\ldots, y_K\}$ (on which the measures defining $\nu_1^1, \nu_2^1,\ldots,\nu_1^m, \nu_2^m$ are supported) is of the form $g_{\bf j} (z)$ for some ${\bf j}\in\Sigma_*$, we may find $\eta>0$ such that for any $w\in\mathbf S^1$ with $\d (z, w)<\eta$
we have
$$
\begin{aligned}
P^{N_1+\cdots +N_m}\delta_{w}&=\alpha P^{N_2+\cdots+N_m}\tilde\nu_2^1+\alpha (1-\alpha) P^{N_3+\cdots+N_m}\tilde\nu_2^2\\
&+\cdots+\alpha(1-\alpha)^{m-1}P^{N_m}\tilde\nu_2^m+(1-\alpha)^m\tilde\mu_2,
\end{aligned}
$$
and each pair of probability measures   $(\nu_1^j, \tilde\nu_2^j)$  is  a convex combination of pairs of measures that are uniformly distributed  over some collections of points $\{x_1,\ldots, x_K\}$ and $\{\tilde y_1,\ldots, \tilde y_K\}$, respectively, such that
$$
x_1<\tilde y_1<x_2<\tilde y_2<\cdots < x_K<\tilde y_K,
$$
and the length of the arcs $(x_1, \tilde y_1), (\tilde y_1, x_2),\ldots, (\tilde y_{K-1}, x_K), (x_K, \tilde y_K)$ is bounded from below by $\tau/2$.
From Remark~\ref{remark} it follows now that there exists $n_0\in\mathbb N$ such that for any $n\ge n_0$ and every $j\in \{1,\dots, m\}$ we have
$$
\left |\int_{\mathbf S^1} f(x)P^n\nu_1^j (\d x)-\int_{\mathbf S^1} f(x)P^n\nu_2^j (\d x)\right|\le \varepsilon/2.
$$
Hence, we obtain that for any $w\in\mathbf S^1$ such that $\d (w, z)<\eta$ we have
$|P^{* n} f(z)-P^{* n} f(w)|\le \varepsilon$ for $n\ge N_1+\ldots+N_m+n_0$ and we are done.
\end{proof}

\section{Strong Law of Large Numbers}

Let $\Gamma=\{g_1,\dots g_k\}\subset H^+$ and let  $\tilde\Gamma=\{g_1^{-1},\dots g_k^{-1}\}$. We define the  probability distribution on $\tilde\Gamma$ by putting  $p(g_i^{-1})=p_i$.

\begin{prop}\label{strong} Let $\Gamma=\{g_1,\dots g_k\}\subset H^+$ be admissible and let $p$ be a probability distribution. Then the  Strong Law of Large Numbers for trajectories starting from an arbitrary point  holds. More precisely: let $\phi\in C(\mathbf{S}^1)$. For every $x\in\mathbf{S}^1$ there exists a subset $\Omega'\subset \Omega$ with $\mathbb P(\Omega')=1$ such that for every $\omega=(i_1,i_2,\dots)\in \Omega'$ 
\begin{equation}\label{SLLN2_1}
\frac{\phi(g_{i_1}(x))+\phi(g_{i_2, i_1}(x))+\ldots+\phi(g_{i_n, i_{n-1},\cdots, i_1}(x))}{n}\to \int_{S^1}\int_{\Gamma}\phi(y)dp(g)\mu (dy).
\end{equation}

\end{prop}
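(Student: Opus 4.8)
The plan is to derive the SLLN from the asymptotic stability established in Theorem~\ref{MTh} (together with the results for the equicontinuous case) by a standard decomposition of the Birkhoff sums into a martingale part and a part controlled by the convergence of $P^{*n}\phi$. The key point is that asymptotic stability gives not just weak-$*$ convergence of $P^n\mu$ but, via the e--property (Section 6), the \emph{uniform} convergence $P^{*n}\phi\to \int\phi\,d\mu_*$ in $C(\mathbf S^1)$; this uniformity is what lets the argument start from an arbitrary point $x$ rather than $\mu_*$-almost every point. Note that the process in \eqref{SLLN2_1} uses the forward compositions $g_{i_n,\dots,i_1}$, so its one-step transition operator is exactly $P^*f(x)=\sum_i p_i f(g_i(x))$; there is no need to pass to $\tilde\Gamma$, and the target constant in \eqref{SLLN2_1} is $\langle\mu_*,\phi\rangle=\int\int\phi(y)\,dp(g)\,\mu_*(dy)$ once one checks $\int P^*\phi\,d\mu_*=\int\phi\,d\mu_*$.

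First I would fix $x\in\mathbf S^1$ and $\phi\in C(\mathbf S^1)$ and set $X_n=X_n(\omega)=\phi(g_{i_n,\dots,i_1}(x))$. Write $\bar\phi=\int_{\mathbf S^1}\phi\,d\mu_*$ and $\psi_n = P^{*n}\phi-\bar\phi$, so that by asymptotic stability and the e--property $\|\psi_n\|\to 0$. Introduce the telescoping corrector $u_N=\sum_{n\ge 0}\psi_n$ evaluated along the orbit — more precisely, for fixed large $N$ consider the function $S_N=\sum_{n=0}^{N-1}\psi_n$, which converges uniformly to a continuous function $u\in C(\mathbf S^1)$ because $\|\psi_n\|$ is summable? — here one must be careful: the e--property plus stability gives $\|\psi_n\|\to 0$ but not a priori summability. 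So instead I would use the classical device: set $M_n=\sum_{j=1}^{n}\big(X_j-P^*\phi(g_{i_{j-1},\dots,i_1}(x))\big)$, which is a martingale with bounded increments with respect to the natural filtration $\F_n=\sigma(i_1,\dots,i_n)$; by the law of large numbers for martingales with bounded increments (Azuma/Birkhoff), $M_n/n\to 0$ $\mathbb P$-a.s. It then remains to show that $\frac1n\sum_{j=1}^{n} P^*\phi(g_{i_{j-1},\dots,i_1}(x))\to\bar\phi$ a.s., and iterating this reduction $\frac1n\sum_{j} P^{*r}\phi(g_{\cdots}(x))\to\bar\phi$ for every fixed $r$; since $\|P^{*r}\phi-\bar\phi\|\to 0$ as $r\to\infty$, choosing $r$ large makes the Cesàro averages of $P^{*r}\phi$ uniformly close to $\bar\phi$, and the error terms from the finitely many martingale corrections are $O(1/n)$. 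Combining these gives \eqref{SLLN2_1}.

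Concretely the steps are: (1) verify $P\mu_*=\mu_*$ implies $\int P^*\phi\,d\mu_*=\int\phi\,d\mu_*$, identifying the right-hand side of \eqref{SLLN2_1}; (2) for each fixed $r\ge 1$, write $\frac1n\sum_{j=1}^n \phi(g_{i_j,\dots,i_1}(x)) = \frac1n\sum_{j=1}^n P^{*r}\phi(g_{i_{j-r},\dots,i_1}(x)) + \frac1n(\text{martingale}_n) + O(r/n)$, where the martingale has increments bounded by $2\|\phi\|$, hence $o(n)$ a.s.\ by the martingale SLLN; (3) invoke the e--property/asymptotic stability to get $\|P^{*r}\phi-\bar\phi\|\to 0$, so that for $\epsilon>0$ one fixes $r$ with $\|P^{*r}\phi-\bar\phi\|<\epsilon$ and concludes $\limsup_n|\frac1n\sum_{j=1}^n\phi(g_{i_j,\dots,i_1}(x))-\bar\phi|\le\epsilon$ a.s.; (4) take a countable sequence $\epsilon\downarrow 0$ and intersect the full-measure sets to obtain $\Omega'$. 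The main obstacle is step (2)--(3): one must be sure the martingale SLLN is being applied on the correct probability space $(\Omega,\mathbb P)$ with the forward filtration — this is fine since $g_{i_j,\dots,i_1}(x)$ is $\F_j$-measurable — and one must handle the ``shifted'' argument $g_{i_{j-r},\dots,i_1}(x)$ versus $g_{i_j,\dots,i_1}(x)$ carefully when telescoping, which only costs boundary terms of size $O(r\|\phi\|/n)\to 0$. No new geometric input about the circle is needed here; everything rests on the spectral results of Sections 5--6 plus elementary martingale theory.
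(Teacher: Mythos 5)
Your argument has a genuine gap at step (3): it hinges on the existence, for each $\epsilon>0$, of a fixed $r$ with $\|P^{*r}\phi-\int\phi\,\d\mu_*\|<\epsilon$, which you justify by ``asymptotic stability plus the e--property''. That uniform convergence is available only in the contractive case (Theorem \ref{MTh} combined with the e--property). For an admissible but \emph{equicontinuous} system asymptotic stability is in general false and is not proved in the paper: already for $\Gamma=\{T_\alpha\}$ a single irrational rotation one has $P^{*n}\phi(x)=\phi(T_\alpha^n x)$, which does not converge at all (similarly for several rotations whose pairwise differences are rational), so no such $r$ exists; the equicontinuous results of Section 5 require the extra hypothesis that some $g_i^{-1}\circ g_j$ has dense orbits, which admissibility does not supply. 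Hence your parenthetical ``together with the results for the equicontinuous case'' does not close the argument, and the equicontinuous half of the proposition is left unproved, even though the SLLN is true there (by unique ergodicity).

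For comparison, the paper treats the two cases separately and uses none of the spectral input you invoke: in the equicontinuous case it applies Birkhoff's theorem for $\mathbb P\times\mu_*$-a.e.\ pair and uses equicontinuity together with the full support of $\mu_*$ to transfer the conclusion from $\mu_*$-generic points to the arbitrary point $x$; in the contractive case it argues by contradiction, using the inverse system $\tilde\Gamma$ and its invariant measure $\tilde\mu_*$ to show that a neighbourhood $(x-\delta,x+\delta)$ is asymptotically contracted by the forward compositions on a set of positive probability, so failure of \eqref{SLLN2_1} at $x$ would propagate to a set of positive $\mu_*$-measure, contradicting Birkhoff. Your martingale decomposition in step (2) is sound, and in the contractive case your route does go through (and is genuinely different from the paper's). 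To repair the general case, replace the fixed-$r$ uniform convergence by its Ces\`aro version: unique ergodicity (Proposition \ref{Prop1.9.1.15}), the Feller property and compactness of $\mathbf S^1$ give $\bigl\|\frac1n\sum_{j=0}^{n-1}P^{*j}\phi-\int\phi\,\d\mu_*\bigr\|\to0$, and then the same bounded-increment martingale SLLN (equivalently, a Breiman-type argument showing that almost surely every weak-$*$ limit of the empirical occupation measures is $P$-invariant, hence equals $\mu_*$) yields \eqref{SLLN2_1} from every starting point, with no appeal to Sections 5--6 at all.
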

\begin{proof} From Lemma \ref{Lem1.26.12.15} it follows that $\Gamma$ is either equicontinuous or contractive.
If $\Gamma$ is equicontinuous then the theorem  holds simply by the Birkhoff ergodic theorem. Indeed, since the invariant measure $\mu_*$ is unique it is also ergodic. From Birkhoff's theorem it follows then that formula \eqref{SLLN2_1} holds  for  $\mathbb P\times\mu_*$-almost every pair $(\omega, z)$.  Since the support of $\mu_*$ is equal to $\mathbf{S}^1$, for any $x\in\mathbf{S}^1$ and any $k\in\mathbb N$ we may find a point $z_k\in\mathbf{S}^1$ and a set $\Omega_k\subset\Omega$ with $\mathbb P(\Omega_k)=1$ such that condition \eqref{SLLN2_1} holds with $x$ replaced with $z_k$ and
$|\phi(g_{i_n, i_{n-1},\cdots, i_1}(x))-\phi(g_{i_n, i_{n-1},\cdots, i_1}(z_k))|<1/k$ for $n\in\mathbb N$ and $(i_1, i_2,\ldots)\in\Omega_k$, by the fact that $\Gamma$ is equicontinuous and $\varphi$ is uniformly continuous. Let $\Omega'=\bigcap_{k=1}^{\infty}\Omega_k$. Since $\mathbb P(\Omega')=1$ and $x$ satisfies condition \eqref{SLLN2_1} for $(i_1, i_2,\ldots)\in\Omega'$, we are done.

Now assume that $\Gamma$ is contractive and that the theorem does not hold. Then there exists $x\in\mathbf S^1$ and a set $\Omega_x\subset \Omega$ such that $\mathbb P(\Omega_x)>0$ and for every $\omega=(i_1,i_2,\dots)\in \Omega_x$ formula \eqref{SLLN2_1} does not hold. Taking a subset of $\Omega_x$, if necessary, we may assume that
\begin{equation}\label{eq1.17.03.15}
\limsup_{n\to\infty}\left|\frac{\phi(g_{i_1}(x))+\phi(g_{i_2, i_1}(x))+\ldots+\phi(g_{i_n, i_{n-1},\cdots, i_1}(x))}{n}-\int_{S^1}\int_{\Gamma}\phi(y)\d p(g)\mu_* (\d y)\right|>\varepsilon
\end{equation}
for all $\omega=(i_1,i_2,\dots)\in \Omega_x$ and some $\varepsilon>0$. 

Now consider the system generated by  $\tilde\Gamma$ and the probability distribution $p$.  Let $\tilde\mu_*$ be its unique invariant measure.
Since $\tilde\mu_*(\{x\})=0$ we may find $\delta>0$ and a subset $\tilde\Omega_x\subset\Omega_x$ with $\mathbb P(\tilde\Omega_x)>0$ such that $\supp(\omega(\tilde\mu_*))\cap(x-\delta, x+\delta)=\emptyset$ for 
$\omega\in\tilde\Omega_x$.

 Let $\theta>0$ be such that $|\phi(u)-\phi(v)|\le\varepsilon/2$ for $|u-v|<\theta$. Set $\gamma:=\inf_{x\in\mathbf S^1} \tilde\mu_*((x-\theta/2, x+\theta/2))$. Obviously $\gamma>0$.
Since $g_{i_1}^{-1}\circ g_{i_2}^{-1}\circ\cdots\circ g_{i_n}^{-1}\circ\tilde\mu_*$ converges weakly to $\omega(\tilde\mu_*))$ for $\mathbb P$-a.s. $\omega=(i_1,i_2,\dots)\in\tilde\Omega_x$, we have $\tilde\mu_*((g_{i_1}^{-1}\circ g_{i_2}^{-1}\circ\cdots\circ g_{i_n}^{-1})^{-1}(
(x-\delta, x+\delta))<\gamma$ for all $n$ sufficiently large. From this and from  the definition of $\gamma$ it follows that
$|(g_{i_1}^{-1}\circ g_{i_2}^{-1}\circ\cdots\circ g_{i_n}^{-1})^{-1}(
(x-\delta, x+\delta))|<\theta$ for $n$ sufficiently large and all $\omega=(i_1,i_2,\dots)\in\tilde\Omega_x$. This gives
$|g_{i_n, i_{n-1},\cdots, i_1}(u)-g_{i_n, i_{n-1},\cdots, i_1}(v)|\le\theta$ and consequently
$$
|\phi(g_{i_n, i_{n-1},\cdots, i_1}(u))-\phi(g_{i_n, i_{n-1},\cdots, i_1}(v))|\le\varepsilon/2
$$
for all $n$ sufficiently large and $u, v\in (x-\delta, x+\delta)$. Together with condition (\ref{eq1.17.03.15}), this gives that
$$
\limsup_{n\to\infty}\left|\frac{\phi(g_{i_1}(u))+\phi(g_{i_2, i_1}(u))+\ldots+\phi(g_{i_n, i_{n-1},\cdots, i_1}(u))}{n}-\int_{S^1}\int_{\Gamma}\phi(y)\d p(g)\mu_* (\d y)\right|>\varepsilon/2
$$
for $\omega=(i_1, i_2,\ldots)\in\tilde\Omega_x$ and all $u\in (x-\delta, x+\delta)$, contrary to the Birkhoff theorem. This contradiction completes the proof. 
\end{proof}

\begin{rem} In \cite{alseda_misiurewicz} the authors introduce the notion of an essentially contracting system,  prove this property for some special system of piecewise linear maps of an interval, and obtain several interesting consequences. The proof of Proposition~\ref{strong} shows, in particular, that our system is essentially contracting. Contrary to \cite{alseda_misiurewicz}, we do not need special estimates; we simply use the  properties of the "conjugate" system generated by $\tilde\Gamma$.  
\end{rem}

\begin{rem} Observe that the known criteria for the Central Limit Theorem and Law of the Iterated Logarithm require more than it was proved in Theorem \ref{MTh}. To apply the results by M. Maxwell and M. Woodroofe (see \cite{MW}) we need to know the rate of convergence to the invariant measure. On the other hand, finding new sufficient conditions for the Central Limit Theorem in the setting of considered IFS's would be an interesting question worthy of further study.
\end{rem}

\end{document}